%December 2011

\documentclass[11pt]{amsart}
\headheight=8pt     \topmargin=0pt
\textheight=624pt   \textwidth=432pt
\oddsidemargin=18pt \evensidemargin=18pt

\usepackage{amssymb}
\usepackage{verbatim}
\usepackage{hyperref}
\usepackage{color}

\begin{document}

% define theorem environments
\newtheorem{maintheorem}{Main Theorem}    %[section]
\newtheorem{theorem}{Theorem}    [section]
\newtheorem{proposition}[theorem]{Proposition}
\newtheorem{conjecture}[theorem]{Conjecture}
\def\theconjecture{\unskip}
\newtheorem{corollary}[theorem]{Corollary}
\newtheorem{lemma}[theorem]{Lemma}
\newtheorem{sublemma}[theorem]{Sublemma}
\newtheorem{fact}[theorem]{Fact}
\newtheorem{observation}[theorem]{Observation}
\theoremstyle{definition}
\newtheorem{definition}{Definition}
\newtheorem{notation}[definition]{Notation}
\newtheorem{remark}[definition]{Remark}
\newtheorem{question}[definition]{Question}
\newtheorem{questions}[definition]{Questions}
\newtheorem{example}[definition]{Example}
\newtheorem{problem}[definition]{Problem}
\newtheorem{exercise}[definition]{Exercise}

%\numberwithin{theorem}{section}
\numberwithin{definition}{section}
\numberwithin{equation}{section}

\def\reals{{\mathbb R}}
\def\torus{{\mathbb T}}
\def\heis{{\mathbb H}}
\def\integers{{\mathbb Z}}
\def\rationals{{\mathbb Q}}
\def\naturals{{\mathbb N}}
\def\complex{{\mathbb C}\/}
\def\distance{\operatorname{distance}\,}
\def\support{\operatorname{support}\,}
\def\dist{\operatorname{dist}\,}
\def\Span{\operatorname{span}\,}
\def\degree{\operatorname{degree}\,}
\def\kernel{\operatorname{kernel}\,}
\def\dim{\operatorname{dim}\,}
\def\codim{\operatorname{codim}}
\def\trace{\operatorname{trace\,}}
\def\Span{\operatorname{span}\,}
\def\dimension{\operatorname{dimension}\,}
\def\codimension{\operatorname{codimension}\,}
\def\nullspace{\scriptk}
\def\kernel{\operatorname{Ker}}
\def\ZZ{ {\mathbb Z} }
\def\p{\partial}
\def\rp{{ ^{-1} }}
\def\Re{\operatorname{Re\,} }
\def\Im{\operatorname{Im\,} }
\def\ov{\overline}
\def\eps{\varepsilon}
\def\lt{L^2}
\def\diver{\operatorname{div}}
\def\curl{\operatorname{curl}}
\def\etta{\eta}
\newcommand{\norm}[1]{ \|  #1 \|}
\def\expect{\mathbb E}
\def\bull{$\bullet$\ }
\def\det{\operatorname{det}}
\def\Det{\operatorname{Det}}
\def\multiR{\mathbf R}
\def\bestA{\mathbf A}
\def\Apq{\mathbf A_{p,q}}
\def\rank{\mathbf r}
\def\diameter{\operatorname{diameter}}
\def\bp{\mathbf p}
\def\bff{\mathbf f}
\def\bg{\mathbf g}
\def\essd{\operatorname{essential\ diameter}}
\def\mab{\max(|A|,|B|)}

\newcommand{\abr}[1]{ \langle  #1 \rangle}

\newcommand{\Norm}[1]{ \Big\|  #1 \Big\| }
\newcommand{\set}[1]{ \left\{ #1 \right\} }
\def\one{{\mathbf 1}}
\newcommand{\modulo}[2]{[#1]_{#2}}

\def\scriptf{{\mathcal F}}
\def\scriptq{{\mathcal Q}}
\def\scriptg{{\mathcal G}}
\def\scriptm{{\mathcal M}}
\def\scriptb{{\mathcal B}}
\def\scriptc{{\mathcal C}}
\def\scriptt{{\mathcal T}}
\def\scripti{{\mathcal I}}
\def\scripte{{\mathcal E}}
\def\scriptv{{\mathcal V}}
\def\scriptw{{\mathcal W}}
\def\scriptu{{\mathcal U}}
\def\scriptS{{\mathcal S}}
\def\scripta{{\mathcal A}}
\def\scriptr{{\mathcal R}}
\def\scripto{{\mathcal O}}
\def\scripth{{\mathcal H}}
\def\scriptd{{\mathcal D}}
\def\scriptl{{\mathcal L}}
\def\scriptn{{\mathcal N}}
\def\scriptp{{\mathcal P}}
\def\scriptk{{\mathcal K}}
\def\scriptP{{\mathcal P}}
\def\scriptj{{\mathcal J}}
\def\frakv{{\mathfrak V}}
\def\frakG{{\mathfrak G}}
\def\frakA{{\mathfrak A}}

\begin{comment}
\def\frakg{{\mathfrak g}}
\def\frakG{{\mathfrak G}}
\def\boldn{\mathbf N}
\end{comment}

\author{Michael Christ}
\address{
        Michael Christ\\
        Department of Mathematics\\
        University of California \\
        Berkeley, CA 94720-3840, USA}
\email{mchrist@math.berkeley.edu}
\thanks{Research supported in part by NSF grant DMS-0901569 and by the
Mathematical Sciences Research Institute.}

%Any opinions, findings, and conclusions
%or recommendations expressed in this paper are those of the author
%and do not necessarily reflect the views of the National Science Foundation.}
%s DMS-0401260 and

\date{December 15, 2011}

\title {An Approximate Inverse Riesz-Sobolev Inequality}

\begin{abstract}
The Riesz-Sobolev inequality relates the convolution of nonnegative functions
with domains $\reals^d$ to the convolution of their symmetric nonincreasing rearrangements. 
We show that for dimension $d=1$, for indicator functions of sets, if the inequality is sufficiently close
to an equality then the sets in question must nearly coincide with intervals.
\end{abstract}
\subjclass[2000]{26D15, 11B25, 11P70}
\keywords{Rearrangement inequality, arithmetic progression, Brunn-Minkowski inequality, compactness}

% MSC 26D15 looks closest to me
% 26D10 (52A40) taken from Burchard's paper
% 26A84 Brascamp-Lieb-Luttinger
% 26A86 Brascamp-Lieb
% 11B13 Additive bases, including sumsets [See also 05B10] 
% 11B25 Arithmetic progressions [See also 11N13] 
% 11P70 Inverse problems of additive number theory, including sumsets

\maketitle

\section{Introduction}

Let $|S|$ denote the Lebesgue measure of $S\subset\reals^d$.
Denote by $f^\star$ the equimeasurable symmetric nonincreasing rearrangement of a nonnegative measurable function $f$,
and if $|S|<\infty$, denote by $S^\star$ the ball $B$ centered at $0\in\reals^d$ which satisfies $|B|=|S|$.

Consider any nonnegative measurable functions $f,g,h$ defined on $\reals^d$
which tend to zero in the sense that for any $t>0$, $|\set{x: f(x)>t}|$ is finite, and the same holds for $g,h$.
The inequality of Sobolev and Riesz \cite{riesz},\cite{sobolev} states that
\begin{equation}\label{eq:RSforfns} \langle f*g,h\rangle \le \langle f^\star*g^\star,h^\star\rangle.\end{equation}
In particular, for indicator functions of measurable sets $A,B,C$ with finite Lebesgue measures,
\begin{equation}\label{eq:RS} \langle \one_A*\one_B,\one_C\rangle \le
\langle \one_{A^\star}*\one_{B^\star},\one_{C^\star}\rangle.  \end{equation}
This foundational inequality directly implies the formally more general \eqref{eq:RSforfns}.

Inverse theorems have been used to characterize those functions which extremize
certain specific inequalities. One element of Lieb's \cite{liebHLS} characterization of extremizers of the
Hardy-Littlewood-Sobolev inequality was the fact that
if $h=h^\star$, and if $h^\star$ is positive and strictly decreasing, 
then equality holds in \eqref{eq:RSforfns} only if $f=f^\star$ and $g=g^\star$ up to translations.
See for instance Theorem~3.9 in \cite{liebloss}.
Christ \cite{christradon} relied on a sharper inverse theorem of Burchard \cite{burchard}
to characterize extremizers for an inequality for the Radon transform. 
The simple one-dimensional case of Burchard's theorem states that if
\begin{equation} \label{HBu} \max(|A|,|B|,|C|)\  \le \  \min(|A|+|B|,\,|B|+|C|,\,|A|+|C|)  \end{equation}
then if equality holds in \eqref{eq:RS},
then $A,B,C$ must be intervals, up to null sets. 
Equality also implies that the centers $a,b,c$ of the intervals $A,B,C$ satisfy $a+b=c$.

In this paper we establish an inverse result which  describes cases of near equality  in \eqref{eq:RS} for $\reals^1$.
This will be applied in a companion paper \cite{christyoung} to characterize those functions which 
nearly extremize Young's convolution inequality for $\reals^d$.

Let $S\bigtriangleup T$ denote the symmetric difference between sets $S,T$.

\begin{theorem} \label{mainthm}
For any $\eps,\eps'>0$ there exists $\delta>0$ with the following property.
Let $A,B,E,F\subset\reals$ be Lebesgue measurable sets with positive, finite measures.
Suppose that 
\begin{equation} (1+\eps')\big(\max(|A|,|B|)-\min(|A|,|B|)\big) \le |E| \le \tfrac13 (1-\eps')(|A|+|B|) \end{equation}
and that $|F|=3|E|$.
If
\begin{align} \label{nearlysharp1}
&\langle \one_A*\one_B,\,\one_{E}\rangle \ge 
\langle \one_{A^\star}*\one_{B^\star},\,\one_{E^\star}\rangle -\delta\mab^2
\\
\intertext{and}
\label{nearlysharp2}
&\langle \one_A*\one_B,\,\one_{F}\rangle \ge 
\langle \one_{A^\star}*\one_{B^\star},\,\one_{F^\star}\rangle -\delta\mab^2
\end{align}
then there exists an interval $I\subset\reals$ such that
\begin{equation} |A\bigtriangleup I|<\eps |A|.  \end{equation}
\end{theorem}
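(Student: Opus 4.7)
The plan is a compactness-and-contradiction argument that reduces matters, in the limit, to Burchard's rigidity theorem. Suppose the conclusion fails: there exist $\eps,\eps'>0$ and sequences $A_n,B_n,E_n,F_n$ satisfying the hypotheses with $\delta_n\to 0$, yet $|A_n\bigtriangleup I|\ge\eps|A_n|$ for every interval $I\subset\reals$ and every $n$. Rescale so that $\max(|A_n|,|B_n|)=1$ and extract a subsequence so that the measures $|A_n|,|B_n|,|E_n|,|F_n|$ converge to limits $a,b,e,f$ with $f=3e$ and the strict inequalities from the hypothesis preserved in the limit.

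The hardest step is tightness: after suitable translations of $A_n$, $B_n$, and of the pair $(E_n,F_n)$, all four sets lie, modulo subsets of measure $o(1)$, in a fixed interval $[-R,R]$ with $R=R(\eps,\eps')$. This is precisely where the second test $F_n$ is essential. Write $V(\alpha,\beta,\gamma):=\langle\one_{[-\alpha/2,\alpha/2]}*\one_{[-\beta/2,\beta/2]},\one_{[-\gamma/2,\gamma/2]}\rangle$ for the sharp Riesz-Sobolev value. When $|E_n|$ sits near the lower endpoint of its allowed range, the partial derivative $\partial_\alpha V$ degenerates on the boundary $\gamma=|\alpha-\beta|$, so a small outlier piece of $A_n$ of mass $\eta$ placed far from the main support produces only a second-order defect in the $E_n$-pairing, which is compatible with the $\delta$-near-equality. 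At the larger scale $|F_n|=3|E_n|$ one sits in the strictly concave interior of $V$, where $\partial_\alpha V$ is bounded below and the same outlier produces a first-order defect $\asymp\eta$. Simultaneous near-equality at both scales therefore forces $\eta=O(\delta)$, and running this argument iteratively rules out any splitting of $A_n$ into pieces separated by distance $\gg 1$.

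Granted tightness, pass to weak-$\ast$ $L^\infty$ limits along a subsequence to obtain $\phi,\psi,\sigma:\reals\to[0,1]$ supported in $[-R,R]$ with integrals $a,b,e$. Convolution is continuous under this convergence against bounded compactly supported test functions, so the $E_n$-hypothesis passes to the limit as $\langle\phi*\psi,\sigma\rangle = V(a,b,e)$. Bound the left side by $\langle\phi^\star*\psi^\star,\sigma^\star\rangle\le V(a,b,e)$: the first inequality is Riesz-Sobolev for functions, and the second follows from the layer-cake identity together with the separate concavity of $V$ in each of its three arguments, applied to the distribution functions $\mu_\phi,\mu_\psi,\mu_\sigma$. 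Equality must hold in both. Equality in the first, via Burchard's theorem at each level triple preserving strict HBu, forces almost every level set of $\phi,\psi,\sigma$ to be an interval with matching centers; equality in the second, via strict concavity of $V$, forces each distribution function to be a.e.\ constant on $(0,1)$, so $\phi,\psi,\sigma$ are themselves indicators of sets. In particular $\phi=\one_I$ for an interval $I$ with $|I|=a$. Finally, weak-$\ast$ convergence of indicators of sets of equal measure to an indicator of a set of the same measure upgrades automatically to $L^1$ convergence, so $|A_n\bigtriangleup I|\to 0$, contradicting the standing assumption.
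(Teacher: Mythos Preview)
Your route differs fundamentally from the paper's, and the tightness step contains a real gap. The heuristic---that an outlier in $A_n$ of mass $\eta$ forces a first-order defect $\asymp\eta$ at the $F$-scale because $\partial_\alpha V$ is bounded below there---tacitly treats $B_n,E_n,F_n$ as already localized. But all four sets are unknown: nothing prevents $B_n$ from carrying a compensating outlier so that a cross-term of $\one_{A_n}*\one_{B_n}$ lands on the main bump, or $E_n,F_n$ from themselves splitting to track several components of $A_n+B_n$. The sentence ``running this argument iteratively rules out any splitting'' is exactly where a genuine argument is needed and none is given; what is required is a \emph{joint} tightness statement for all four sets, and the derivative heuristic does not supply one. (Incidentally, the second-order/first-order dichotomy you describe is not the actual mechanism: in the model case $B=[0,1]$, $A=[0,1-\eta]\cup[M,M+\eta]$, the optimal defect is $\tfrac14\eta^2+\tfrac12\eta|E|$, so the $E$-scale and the $F$-scale give comparable control on $\eta$.)

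The paper sidesteps tightness for general $E$ altogether. It uses the two hypotheses to show first that $E$ is itself nearly an interval, via additive combinatorics. Near-equality at $E$ yields the lower bound $|S_\alpha|\ge|E|-C\delta^{1/2}\mab$ for the superlevel set $S_\alpha=\{\one_A*\one_B>\alpha\}$; near-equality at $F$ yields the upper bound $|S_\beta|\le 3|E|+C\delta^{1/2}\mab$. The key structural fact is the inclusion $S_\alpha-S_\alpha+S_\alpha\subset S_\beta$, proved by the triangle inequality for $x\mapsto\norm{\one_{A+x}-\one_{\tilde B}}_1$; combined with Brunn--Minkowski it gives $|S_\alpha+S_\alpha|\le 2|S_\alpha|+C\delta^{1/2}\mab$. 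A continuum version of Freiman's $3k{-}4$ theorem then forces $S_\alpha$, and hence $E$, to lie within $O(\delta^{1/2}\mab)$ of a single interval $I$. Only after this reduction does the paper run a compactness argument (Proposition~\ref{prop:trivialintervalcase}), now with one of the three sets equal to the fixed interval $I$---and in that restricted setting tightness is provable (Lemma~\ref{lemma:nogaps}), essentially because $I$ cannot split and so the strict subadditivity $\Theta(\alpha^-,\beta^-,|I|)+\Theta(\alpha^+,\beta^+,|I|)<\Theta(\alpha,\beta,|I|)$ bites directly. Your post-tightness steps mirror that restricted compactness argument closely, but it is the sumset reduction that makes the argument go through; the role of the second set $F$ in the paper is to bound $|S_\beta|$ for the Freiman step, not to repair a degeneracy in the tightness argument.
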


The hypotheses of Theorem~\ref{mainthm} may benefit from clarifications.
Let $S_{t,A,B}$ denote the superlevel set 
\begin{equation} S_{t,A,B}=\set{x: \big(\one_A*\one_B\big)(x)>t}.\end{equation}
We often write $S_t$ as shorthand for $S_{t,A,B}$.
When $A,B$ are intervals,
$|S_{t,A,B}| \equiv |A|+|B|-2t$ for all $t\in [0,\norm{\one_A*\one_B}_\infty)$.
\begin{enumerate}
\item
The unexpected, and perhaps unsatisfactory, feature of this formulation is that
a lower bound for $\langle \one_A*\one_B,\one_S\rangle$ is hypothesized
for two sets $S$, rather than merely for a single set.
Worse yet, the measures of these two sets are required to be coupled. 
\item
The condition that $|F|=3|E|$ can be relaxed, for trivial reasons, to $|F|=3|E|+O(\delta\mab)$.
\item
The hypotheses are vacuous unless $\min(|A|,|B|)>\tfrac12\max(|A|,|B|)$. 
\item
In a companion paper \cite{christyoung} in which Theorem~\ref{mainthm} is applied, 
its hypotheses  are satisfied in a much more robust form. Indeed, 
\eqref{nearlysharp1} is known in that application to hold for a family of sets $E$
whose measures take on essentially all values in the range  $\max(|A|,|B|)-\min(|A|,|B|)<|E|<|A|+|B|$.
Thus the requirement that $|F|=3|E|$ is no encumbrance there.
The general form of the analysis in \cite{christyoung} suggests that this robust form of
the hypotheses might arise naturally in other applications, as well.
\item
Define $\alpha,\beta$ by $|E|=|A|+|B|-2\alpha$ and $|F|=|A|+|B|-2\beta$.
As will be proved below in Lemmas~\ref{lemma:Salphalowerbound} and \ref{lemma:Salphaupperbound}, 
it follows from the hypotheses \eqref{nearlysharp1}, \eqref{nearlysharp2} that 
\begin{align*}
&\big|\,|S_\alpha|-|E|\,\big|\le C\delta^{1/2}\mab,
\\ &\langle \one_A*\one_B,\,\one_{S_\alpha}\rangle \ge 
\langle \one_{A^\star}*\one_{B^\star},\,\one_{S_\alpha^\star}\rangle -C\delta^{1/2}\mab^2
\end{align*}
with corresponding statements for $S_\beta,F$.
\item
The hypothesis \eqref{nearlysharp2} involving $F$ can be replaced by its weaker consequence
\begin{equation}\label{Hbeta} |S_\beta| \le |A|+|B|-2\beta -\delta\mab \end{equation}
established in Lemma~\ref{lemma:Salphaupperbound}, where $\beta=\tfrac12(|A|+|B|-3|E|)$.
Taken at face value, \eqref{Hbeta} is an upper bound on $\one_A*\one_B$,
rather than a lower bound.
This seeming paradox hints at the structure of our analysis, which is related to the Brunn-Minkowski
inequality $|U+V|\ge |U|+|V|$. A well-known inverse principle is that if equality
holds in the Brunn-Minkowski inequality, then $U,V$ are equal to intervals, up to null sets. 
Here an approximate inverse principle, governing the case in which $|U+V|$ is relatively small, is exploited.
\item
Theorem~\ref{mainthm} continues to hold true if the relation $|F|=3|E|$ is generalized to $|F|=k|E|$
for an odd positive integer $k$, provided that $|F|\le (1-\eps')(|A|+|B|)$.
\end{enumerate}

Our analysis relies on an approximate inverse Brunn-Minkowski theorem, 
Proposition~\ref{prop:keystone} below, which at present seems
to be known only in dimension one. With the exception of this pivotal ingredient, the
analysis extends in a straightforward way to Euclidean space of arbitrary dimension, 
with the interval $I$ in the conclusion replaced by an ellipsoid.
We hope to establish a suitable approximate inverse Brunn-Minkowski theorem
for all dimensions in a subsequent paper, 
obtaining as a consequence an extension of Theorem~\ref{mainthm} to arbitrary dimensions.

A useful fact \cite{liebHLS} is that, under certain mild supplementary
assumptions, if $k=k^\star$ is unbounded, everywhere positive, and strictly decreasing,
and if $\langle f*g,k\rangle =\langle f^\star*g^\star,k\rangle$, then
$f=f^\star$ and $g=g^\star$ almost everywhere, up to translations.
The following extension, in which the hypothesis $k=k^\star$ is dropped at the expense
of a slightly stronger hypothesis on $k^\star$, follows directly from Theorem~\ref{mainthm}.

\begin{theorem} 
Let $h$ be a nonnegative function such that $|\set{x: h(x)>t}|<\infty$ for all $t>0$, and $|\set{x:h(x)>0}|>0$. 
Suppose that its symmetric nonincreasing rearrangement $h^\star$ is continuous and strictly decreasing
on its support.  Let $K$ be a compact subset of $(0,\norm{h}_\infty)$.

For any $\eps>0$ there exists $\delta>0$ with the following property.
Let $A,B\subset\reals$ be Lebesgue measurable sets with $|A|,|B|\in K$.
Suppose that  $\max(|A|,|B|)\le (2-\rho)\min(|A|,|B|)$.
If \begin{equation*} 
\langle \one_A*\one_B,\,h\rangle \ge 
\langle \one_{A^\star}*\one_{B^\star},\,h^\star\rangle -\delta\mab^2
\end{equation*}
then there exists an interval $I\subset\reals$ such that
\begin{equation} |A\bigtriangleup I|<\eps |A|.  \end{equation}
\end{theorem}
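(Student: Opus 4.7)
The plan is to deduce this theorem from Theorem~\ref{mainthm} by a layer-cake decomposition of $h$.  Writing $h=\int_0^{\|h\|_\infty}\one_{\{h>t\}}\,dt$ and $h^\star=\int_0^{\|h\|_\infty}\one_{\{h^\star>t\}}\,dt$, and applying the Riesz--Sobolev inequality \eqref{eq:RS} to each triple $(A,B,\{h>t\})$, the hypothesis gives
\[
0\;\le\;\int_0^{\|h\|_\infty} D(t)\,dt\;\le\;\delta\mab^2,
\]
where $D(t):=\langle\one_{A^\star}*\one_{B^\star},\,\one_{\{h^\star>t\}}\rangle-\langle\one_A*\one_B,\,\one_{\{h>t\}}\rangle\ge 0$ is the deficit at level $t$.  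Because $h^\star$ is continuous and strictly decreasing on its support, the distribution function $\mu(t):=|\{h>t\}|$ is a homeomorphism of $(0,\|h\|_\infty)$ onto $(0,|\{h>0\}|)$ with continuous inverse $\psi=\mu^{-1}$.  For $u$ in this range the superlevel set $E_u:=\{h>\psi(u)\}$ satisfies $|E_u|=u$ and $E_u^\star=\{h^\star>\psi(u)\}$, so $D(\psi(u))$ is precisely the Riesz--Sobolev deficit of the triple $(A,B,E_u)$.

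Assuming $|A|\ge|B|$ without loss of generality and writing $m=\mab$, set
\[
I_E:=\bigl[(1+\eps')(|A|-|B|),\,\tfrac13(1-\eps')(|A|+|B|)\bigr],\qquad I_F:=3\,I_E.
\]
For a sufficiently small $\eps'=\eps'(\rho)>0$, the assumption $\max(|A|,|B|)\le(2-\rho)\min(|A|,|B|)$ makes $|I_E|$ bounded below by a positive constant depending only on $\rho$ and $K$, while compactness of $K$ confines the relevant band of levels $\psi(I_E\cup I_F)$ to a compact subinterval on which $\psi$ is uniformly continuous and strictly monotone.  The task reduces to finding $u^*\in I_E$ at which $D(\psi(u^*))$ and $D(\psi(3u^*))$ are both at most $\eta\,m^2$, where $\eta=\eta(\eps,\rho)>0$ is the threshold demanded by Theorem~\ref{mainthm} applied to the pair $E=E_{u^*}$, $F=E_{3u^*}$; the conclusion $|A\bigtriangleup I|<\eps|A|$ is then immediate.

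Producing such a $u^*$ is the principal obstacle.  By Chebyshev the bad set $B_\eta:=\{t:D(t)>\eta m^2\}$ has Lebesgue measure at most $\delta/\eta$ in the level variable, but one must simultaneously avoid $B_\eta$ at two levels $\psi(u^*)$ and $\psi(3u^*)$ whose corresponding superlevel-set measures stand in the prescribed $1{:}3$ ratio.  Using the uniform continuity of $\psi$ on $\psi(I_E\cup I_F)$ together with the slack afforded by Remark~2 (which permits $|F|$ to differ from $3|E|$ by $O(\delta m)$), a mild perturbation argument produces $u^*\in I_E$ so that $\psi(u^*)$ and some nearby level whose superlevel set has measure within $O(\delta m)$ of $3u^*$ both lie outside $B_\eta$.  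This final selection is the step at which the continuity and strict monotonicity of $h^\star$ on its support are essential: they guarantee that the change of variables between the level axis and the measure axis is a homeomorphism with controlled modulus on the compact range relevant here.
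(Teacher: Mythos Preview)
The paper offers no explicit proof here, only the assertion that the result ``follows directly from Theorem~\ref{mainthm}''. Your layer-cake reduction is precisely the intended argument and is correct in outline.

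Two points in the final selection step need sharpening. First, the slack in $|F|$ permitted by Remark~(2) following Theorem~\ref{mainthm} is $O(\eta\,m)$, where $\eta$ is the threshold supplied by Theorem~\ref{mainthm}, not $O(\delta\,m)$ with $\delta$ the parameter of the present theorem; conflating the two would make the argument circular. Second, ``uniform continuity of $\psi$ on $\psi(I_E\cup I_F)$'' is both garbled (that set lies in the $t$-axis, not in the domain of $\psi$) and pointed in the wrong direction: uniform continuity of $\psi$ bounds the $t$-length of $\psi(J)$ from \emph{above}, whereas what you need is a \emph{lower} bound on the $t$-length of the interval $\psi\bigl([3u^*-c\eta m,\,3u^*+c\eta m]\bigr)$ of admissible levels $t_2$, uniform in $u^*$. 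That lower bound follows from continuity and strict monotonicity of $\psi$ together with compactness of the relevant $u$-range (equivalently, from uniform continuity of $\mu=\psi^{-1}$), yielding some $\sigma=\sigma(h,K,\rho,\eps)>0$. Once $\delta<\eta\sigma$, the bad set $B_\eta$ (of $t$-measure at most $\delta/\eta$) cannot cover this interval, and having first chosen any good $t_1\in\psi(I_E)\setminus B_\eta$ one obtains a good $t_2$ as well. With these adjustments the argument is complete.
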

\noindent The constants in this result do depend on $h,K$.

The structure of the analysis is as follows: (i) The hypotheses of Theorem~\ref{mainthm}
imply a lower bound for $|S_\alpha|$ and an upper bound for $|S_\beta|$,
with $\alpha, \beta$ as in the statement. (ii) $S_\beta\supset S_\alpha-S_\alpha+S_\alpha$. Therefore
(i) becomes an upper bound for the measure of a sumset associated to $S_\alpha$.
(iii) An inverse theorem of additive combinatorics, concerning sets whose sumsets are small,
adapted to the continuum setting, implies that $S_\alpha$ nearly coincides with an interval. 
(iv) A compactness argument establishes the special case of Theorem~\ref{mainthm} in which the
set $E$ is nearly an interval. 

\section{On measures of superlevel sets of convolutions}

For $a\in(0,\infty)$ let $I_a=[-\tfrac12 a,\tfrac12 a]$.  Define 
\begin{equation} \Theta(a,b,c) = \langle \one_{I_a}*\one_{I_b},\one_{I_c}\rangle.  \end{equation}
This function $\Theta:(0,\infty)^3\to(0,\infty)$ is continuous, is strictly positive on $(0,\infty)^3$,
and is a symmetric function of its three arguments $a,b,c$.  If $0<b<a$ and $a-b<c<a+b$, then
\begin{equation}\label{Thetaexpression}
\Theta(a,b,c) = b(a-b)+\tfrac12\int_{a-b}^c (a+b-t)\,dt.  \end{equation}
In this section, we deduce certain bounds on the measures of superlevel sets
from the near equality $\langle \one_A*\one_B,\one_E\rangle\ge \Theta(|A|,|B|,|E|)-\delta\mab^2$.

Recall the notation $S_t=S_{t,A,B}=\set{x: (\one_A*\one_B)(x)>t}$.
\begin{lemma} \label{lemma:Salphalowerbound}
Let $A,B,E$ be Lebesgue measurable sets of finite, positive measures satisfying
\begin{gather}
\label{Hyp1}
\max(|A|,|B|)-\min(|A|,|B|)<|E|<|A|+|B| 
\\
\label{Hyp2}
\langle \one_A*\one_B,\one_E\rangle \ge \Theta(|A|,|B|,|E|)-\delta\mab^2\end{gather}
for some $\delta\in(0,1]$.  Define $\alpha$ by $|E|=|A|+|B|-2\alpha$.  Then 
\begin{equation}|S_{\alpha}\cap E| \ge |A|+|B|-2\alpha - C\delta^{1/2}\mab.\end{equation}
\end{lemma}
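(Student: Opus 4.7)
The plan is to extract the bound $|E\setminus S_\alpha|\le C\delta^{1/2}\mab$ by squeezing $\langle\one_A*\one_B,\one_E\rangle$ between the hypothesized lower bound and an upper bound built from two separate estimates: one on $E\cap S_\alpha$ and one on its complement in $E$. Write $f=\one_A*\one_B$, assume without loss of generality that $|A|\ge|B|$ (so $\mab=|A|$), and set $\eta=|E\setminus S_\alpha|$ and $c=|E\cap S_\alpha|=|E|-\eta$.

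First I would split the integral as $\int_E f=\int_{E\cap S_\alpha}f+\int_{E\setminus S_\alpha}f$. On $E\setminus S_\alpha$ one has $f\le\alpha$ by the very definition of $S_\alpha$, giving the trivial bound $\int_{E\setminus S_\alpha}f\le\alpha\eta$. On $E\cap S_\alpha$, the Riesz--Sobolev inequality \eqref{eq:RS} applied to the triple $A,B,E\cap S_\alpha$ gives $\int_{E\cap S_\alpha}f\le\Theta(|A|,|B|,c)$. Adding these two inequalities and comparing with the hypothesis \eqref{Hyp2}, one obtains
\[
\Theta(|A|,|B|,|E|)-\Theta(|A|,|B|,c)\le\alpha\eta+\delta\mab^2.
\]

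The decisive computation is to evaluate the left-hand side in the main regime $c\ge|A|-|B|$. There, \eqref{Thetaexpression} simplifies (after elementary manipulation) to the clean quadratic $\Theta(|A|,|B|,x)=|A||B|-((|A|+|B|-x)/2)^2$, applied here both at $x=|E|$ (with half-deficit $\alpha$) and at $x=c$ (with half-deficit $\alpha+\eta/2$, since $|E|-c=\eta$). Direct expansion gives
\[
\Theta(|A|,|B|,|E|)-\Theta(|A|,|B|,c)=(\alpha+\eta/2)^2-\alpha^2=\alpha\eta+\eta^2/4,
\]
so the $\alpha\eta$ terms cancel on both sides of the sandwich, leaving $\eta^2/4\le\delta\mab^2$, i.e., $\eta\le 2\delta^{1/2}\mab$. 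Unwinding, $|S_\alpha\cap E|=|E|-\eta\ge|A|+|B|-2\alpha-2\delta^{1/2}\mab$, which is the conclusion with $C=2$.

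The main obstacle is the complementary regime $c<|A|-|B|$, in which $\Theta(|A|,|B|,c)=|B|c$ rather than the clean quadratic, and the cancellation of $\alpha\eta$ fails. Substituting this alternative expression into the displayed sandwich inequality leads to $(|B|-\alpha)(|A|-\alpha-c)\le\delta\mab^2$. Since $|A|-\alpha-c>|B|-\alpha$ in this regime, this already forces $|B|-\alpha<\delta^{1/2}\mab$, which in turn places $|E|$ within $2\delta^{1/2}\mab$ of its hypothesized lower bound $|A|-|B|$. Combining this constraint with the trivial bound $\eta\le|E|-c$ then recovers a bound on $\eta$ of the required form in this edge case as well, at the cost of a slightly enlarged absolute constant $C$.
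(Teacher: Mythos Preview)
Your argument in the main regime $c\ge|A|-|B|$ is correct and is exactly the paper's proof with the computation made explicit: the paper asserts $\Theta(|A|,|B|,|E'|)+\alpha(|E|-|E'|)+c_0(|E|-|E'|)^2\le\Theta(|A|,|B|,|E|)$ as ``a simple calculation,'' and you supply it with $c_0=\tfrac14$.

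The complementary regime $c<|A|-|B|$, however, has a gap. You correctly derive $(|B|-\alpha)(|A|-\alpha-c)\le\delta\mab^2$ and hence $u:=|B|-\alpha<\delta^{1/2}\mab$, so that $|E|=(|A|-|B|)+2u$ lies within $2\delta^{1/2}\mab$ of $|A|-|B|$. But the final sentence does not follow: your ``trivial bound $\eta\le|E|-c$'' is just the identity $\eta=|E|-c$, and combined with $c\ge0$ it yields only $\eta\le|E|<(|A|-|B|)+2\delta^{1/2}\mab$. This is not of order $\delta^{1/2}\mab$ unless $|A|-|B|$ itself is, and nothing in the lemma's hypotheses forces that. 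Concretely, with $|A|=2$, $|B|=1$, and $u$ arbitrarily small, the algebraic constraint $u(\eta-u)\le\delta\mab^2$ that your sandwich produces is compatible with $\eta$ as large as $|A|-|B|=1$; so this chain of inequalities alone cannot close the argument. (The paper's displayed quadratic inequality, read literally for all $|E'|\ge0$, has the same defect in this range---at $|E'|=0$ it would require $(|A|-\alpha)u\ge c_0|E|^2$, which fails as $u\to0$---so you have put your finger on a genuine subtlety; but the resolution you sketch is not yet complete.)
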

\noindent In particular, $|S_{\alpha}| \ge |A|+|B|-2\alpha - C\delta^{1/2}\mab$.

\begin{proof}
Set $f=\one_A*\one_B$.  Define $E'=E\cap S_\alpha$.
A simple calculation using \eqref{Thetaexpression} demonstrates that
\[ \Theta(|A|,|B|,|E'|)+\alpha (|E|-|E'|) + c\big(|E|-|E'|)^2 \le \Theta(|A|,|B|,|E|).  \] 
Consequently
\begin{align*}
\langle \one_A*\one_B,\one_E\rangle = \int_E f
&= \int_{E'}f +\int_{E\setminus E'}f
\\
&\le \Theta(|A|,|B|,|E'|) + \alpha|E\setminus E'|
\\
& \le \Theta(|A|,|B|,|E|)-c\big(|E|-|E'|)^2.
\end{align*}
Since by hypothesis $\int_E f\ge \Theta(|A|,|B|,|E|)-\delta\mab^2$, it follows that 
\[|E\setminus E'|^2=(|E|-|E'|)^2\le C\delta\mab^2,\]
so
\[ |E\setminus E'|\le C\delta^{1/2}\mab. \]
Therefore
\begin{multline} 
|S_\alpha\cap E|=|E'| \\ =|E|-|E\setminus E'| =|A|+|B|-2\alpha-|E\setminus E'|
\\ \ge |A|+|B|-2\alpha-C\delta^{1/2}\mab.  \end{multline}
\end{proof}

\begin{lemma} \label{lemma:Salphaupperbound}
Let $A,B,E$ be Lebesgue measurable sets of finite, positive measures satisfying \eqref{Hyp1} and \eqref{Hyp2}.
for some $\delta\in(0,1]$.  Define $\alpha$ by $|E|=|A|+|B|-2\alpha$.  Then 
\begin{equation} |E\bigtriangleup S_{\alpha}|\le C\delta^{1/2}\mab\end{equation}
and consequently
\begin{equation}|S_{\alpha}|\le |A|+|B|-2\alpha+2\delta^{1/2}\mab.\end{equation}
\end{lemma}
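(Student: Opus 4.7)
The plan is to mirror the proof of Lemma~\ref{lemma:Salphalowerbound}, replacing the subset $E' = E \cap S_\alpha$ used there by the superset $U := E \cup S_\alpha$. With $s := |U| - |E| = |S_\alpha \setminus E|$, it suffices to establish $s \le 2\delta^{1/2}\mab$: combining this with the estimate $|E \setminus S_\alpha| \le C\delta^{1/2}\mab$ already supplied by Lemma~\ref{lemma:Salphalowerbound} gives the symmetric-difference bound, and the second display is immediate from $|S_\alpha| \le |E| + s$.

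To produce this bound, write $f = \one_A * \one_B$ and split $\int_U f = \int_E f + \int_{S_\alpha \setminus E} f$. Hypothesis \eqref{Hyp2} gives the lower bound $\int_E f \ge \Theta(|A|,|B|,|E|) - \delta\mab^2$, and $f > \alpha$ on $S_\alpha$ gives $\int_{S_\alpha \setminus E} f \ge \alpha s$, while Riesz-Sobolev \eqref{eq:RS} applied with $h = \one_U$ supplies the matching upper bound $\int_U f \le \Theta(|A|,|B|,|U|)$. Combining,
\[
\Theta(|A|,|B|,|U|) - \Theta(|A|,|B|,|E|) - \alpha s \ \ge\ -\delta\mab^2.
\]
From the formula \eqref{Thetaexpression}, the map $c \mapsto \Theta(|A|,|B|,c)$ is quadratic with second derivative $-\tfrac12$ on $[|A|-|B|,|A|+|B|]$, and its slope at $c = |E|$ is exactly $\alpha$ since $|E| = |A|+|B|-2\alpha$. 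Hence, provided $|U| \le |A|+|B|$, the left-hand side above equals $-\tfrac14 s^2$ identically, and the inequality $\tfrac14 s^2 \le \delta\mab^2$ yields $s \le 2\delta^{1/2}\mab$, as required.

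The step requiring the most care is confirming that one really is inside the quadratic regime $|U| \le |A|+|B|$. If $|U|$ ever exceeds $|A|+|B|$ then $\Theta(|A|,|B|,\cdot)$ has already flattened to the constant $|A||B|$, the ``concavity defect'' in the displayed inequality is only $\alpha^2$ rather than $\tfrac14 s^2$, and one recovers only the weaker $s \le \alpha + \delta\mab^2/\alpha$. In the intended context of Theorem~\ref{mainthm} the lower bound on $\alpha$ built into those hypotheses rules out this regime for small $\delta$; more generally, an excursion into it forces $|A+B| > |A|+|B|$, so quantitative Brunn-Minkowski should push $\delta$ away from zero and allow a trivial size bound to close the argument. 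I expect this case-distinction to be the only real wrinkle in the proof.
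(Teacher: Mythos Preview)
Your approach is essentially identical to the paper's: set $U=E\cup S_\alpha$, bound $\int_U f$ from below by $\int_E f + \alpha\,|S_\alpha\setminus E|$ using hypothesis \eqref{Hyp2} and the definition of $S_\alpha$, bound it from above by $\Theta(|A|,|B|,|U|)$ via Riesz--Sobolev, and then exploit the quadratic behavior of $c\mapsto\Theta(|A|,|B|,c)$ at $c=|E|$ to turn the resulting inequality into $\tfrac14(|U|-|E|)^2\le\delta\mab^2$. The combination with Lemma~\ref{lemma:Salphalowerbound} to control the full symmetric difference is likewise the same.

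The one difference worth noting concerns the ``wrinkle'' you flag. Rather than committing to $U=E\cup S_\alpha$ and then splitting into cases according to whether $|U|\le|A|+|B|$, the paper works with an arbitrary measurable $S$ satisfying $E\subset S\subset S_\alpha\cup E$ together with the explicit constraint $|S|\le|A|+|B|$. This keeps the argument entirely inside the range where the integral formula \eqref{Thetaexpression} is valid, so the quadratic identity holds with no case distinction; the conclusion for $S_\alpha\cup E$ itself then follows because if its measure exceeded $|E|+2\delta^{1/2}\mab$ one could choose $S$ with measure strictly between that bound and $|A|+|B|$, contradicting what was just proved. This is a clean device that dispenses with the separate analysis of the flat regime, and is worth adopting.
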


\begin{proof}
Consider any measurable set $S$ such that
$E\subset S\subset S_{\alpha}\cup E$ and $|S|\le |A|+|B|$.
Then 
\begin{align*} \langle \one_A*\one_B,\one_S\rangle & \ge \alpha|S\setminus E| + \langle \one_A*\one_B,\one_E\rangle
\\ &\ge \Theta(|A|,|B|,|E|)-\delta\mab^2 + \alpha (|S|-|E|).  \end{align*}
On the other hand, by the Riesz-Sobolev inequality and the integral formula \eqref{Thetaexpression}
for $\Theta$,
\[ \langle \one_A*\one_B,\one_S\rangle \le\Theta(|A|,|B|,|S|)
= \Theta(|A|,|B|,|E|)+\tfrac12\int_{|E|}^{|S|} (|A|+|B|-t)\,dt.  \]
Therefore
\[ \alpha(|S|-|E|)\le\delta\mab^2 +\tfrac12\int_{|E|}^{|S|} (|A|+|B|-t)\,dt, \]
which can be rewritten as
\[ \tfrac12 \int_{|E|}^{|S|} \big( t-(|A|+|B|-2\alpha)\big)\,dt \le \delta\mab^2.  \]
Since $|A|+|B|-2\alpha=|E|$, the left-hand side is simply $\tfrac14(|S|-|E|)^2$. 
Thus
\[ |S|\le |E|+2\delta^{1/2}\mab = |A|+|B|-2\alpha+2\delta^{1/2}\mab.  \] 
We conclude that $|S_{\alpha}\cup E|\le |A|+|B|-2\alpha+2\delta^{1/2}\mab$.
Since it was shown in the preceding lemma that
$|S_{\alpha}\cap E|\ge |A|+|B|-2\alpha-C\delta^{1/2}\mab$,
the required bound for $|S_\alpha\bigtriangleup E|$ follows.
\end{proof}

\begin{corollary}
Under the hypotheses of Lemmas~\ref{lemma:Salphalowerbound} and \ref{lemma:Salphaupperbound},
\begin{equation} \big|\, |S_\alpha|-|E|\, \big|\le C\delta^{1/2}\mab\end{equation}   and
\begin{equation} \langle \one_A*\one_B,\one_{S_\alpha}\rangle \ge \Theta(|A|,|B|,|S_\alpha|)-C\delta^{1/2}\mab^2.
\end{equation} \end{corollary}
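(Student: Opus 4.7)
The first inequality is nothing but a restatement of what the two preceding lemmas already give: Lemma~\ref{lemma:Salphalowerbound} yields $|S_\alpha| \ge |A|+|B|-2\alpha - C\delta^{1/2}\mab = |E| - C\delta^{1/2}\mab$, and Lemma~\ref{lemma:Salphaupperbound} yields $|S_\alpha| \le |E| + 2\delta^{1/2}\mab$, so the two-sided bound on $\big|\,|S_\alpha|-|E|\,\big|$ drops out directly.

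For the second inequality, my plan is to exchange $E$ for $S_\alpha$ using a simple pointwise comparison on the symmetric difference. Writing $f = \one_A * \one_B$, we have
\begin{equation*}
\langle f,\one_{S_\alpha}\rangle - \langle f,\one_E\rangle = \int_{S_\alpha\setminus E} f - \int_{E\setminus S_\alpha} f.
\end{equation*}
By the definition of $S_\alpha$, the integrand exceeds $\alpha$ on $S_\alpha\setminus E$ and is at most $\alpha$ on $E\setminus S_\alpha$, so the right-hand side is bounded below by $\alpha(|S_\alpha\setminus E| - |E\setminus S_\alpha|) = \alpha(|S_\alpha|-|E|)$. Combined with the hypothesis \eqref{Hyp2}, this gives
\begin{equation*}
\langle f,\one_{S_\alpha}\rangle \ge \Theta(|A|,|B|,|E|) + \alpha(|S_\alpha|-|E|) - \delta\mab^2.
\end{equation*}

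It remains to identify the first two terms with $\Theta(|A|,|B|,|S_\alpha|)$ up to a favorable error. Using the explicit formula \eqref{Thetaexpression} and the fact that $|A|+|B|-|E| = 2\alpha$, a direct computation (letting $\eta = |S_\alpha|-|E|$ and expanding the integrand around $t=|E|$) gives
\begin{equation*}
\Theta(|A|,|B|,|S_\alpha|) = \Theta(|A|,|B|,|E|) + \alpha\eta - \tfrac14\eta^2,
\end{equation*}
valid in the regime in which both $|E|$ and $|S_\alpha|$ lie in $(|A|+|B|-2\min(|A|,|B|),\,|A|+|B|)$; the smallness of $|\eta|$ from Part~1 ensures we stay in this regime. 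Substituting yields
\begin{equation*}
\langle f,\one_{S_\alpha}\rangle \ge \Theta(|A|,|B|,|S_\alpha|) + \tfrac14\eta^2 - \delta\mab^2 \ge \Theta(|A|,|B|,|S_\alpha|) - \delta\mab^2,
\end{equation*}
and since $\delta \in (0,1]$ we have $\delta \le \delta^{1/2}$, which is even better than the claimed $C\delta^{1/2}\mab^2$ error.

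No step is truly an obstacle here: the only subtlety is verifying that $|S_\alpha|$ remains in the range where the quadratic expansion of $\Theta$ above is valid, which is where one uses the first inequality of the corollary together with the hypothesis $|E| < |A|+|B|$ from \eqref{Hyp1} (and an analogous lower check on the other side). Once that is in hand, the computation is immediate.
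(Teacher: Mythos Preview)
Your argument is correct and in fact yields a sharper conclusion than the paper's, but it differs from the route taken there. The paper proves the second inequality crudely: it bounds
\[
\big|\langle \one_A*\one_B,\,\one_{S_\alpha}-\one_E\rangle\big|
\le \norm{\one_A*\one_B}_\infty\,|S_\alpha\bigtriangleup E|
\le C\delta^{1/2}\mab\cdot\min(|A|,|B|),
\]
and then uses that $r\mapsto\Theta(|A|,|B|,r)$ is Lipschitz with constant $\mab$ to absorb the change from $|E|$ to $|S_\alpha|$. This costs a factor $\delta^{1/2}$ in the error. Your argument instead exploits the defining property of $S_\alpha$ (namely $f>\alpha$ on $S_\alpha$ and $f\le\alpha$ off it) to get the one-sided bound $\langle f,\one_{S_\alpha}\rangle-\langle f,\one_E\rangle\ge \alpha(|S_\alpha|-|E|)$, and matches this exactly against the linear part of the expansion of $\Theta$ around $c=|E|$. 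The upshot is an error of size $\delta\mab^2$, strictly better than the $C\delta^{1/2}\mab^2$ stated.

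One small point to tighten: your quadratic identity for $\Theta$ requires $|S_\alpha|$ to lie in the interval $\big(|A|+|B|-2\min(|A|,|B|),\,|A|+|B|\big)$, and the justification you give (``$|E|<|A|+|B|$ from \eqref{Hyp1} together with $|\eta|$ small'') does not quite suffice, since \eqref{Hyp1} provides only strict inequality with no quantitative gap, while $|\eta|\le C\delta^{1/2}\mab$ could exceed $2\alpha$ or $2(\min(|A|,|B|)-\alpha)$. The cleanest fix is to observe that $c\mapsto\Theta(|A|,|B|,c)$ is concave with derivative equal to $\alpha$ at $c=|E|$; then $\Theta(|A|,|B|,|E|)+\alpha\eta\ge\Theta(|A|,|B|,|S_\alpha|)$ holds for \emph{all} $\eta$, and your chain of inequalities goes through without any regime check. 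With this adjustment your proof is complete and slightly stronger than what the corollary asserts.
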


\begin{proof} 
The first conclusion follows from our upper bound for $|S_\alpha\bigtriangleup E|$.
The final conclusion follows from the inequality
\[ \big|\langle \one_A*\one_B,\,\one_{S_\alpha}-\one_E\rangle\big|
\le \norm{\one_A*\one_B}_\infty|S_\alpha\bigtriangleup E| \le \min(|A|,|B|)C\delta^{1/2}\mab  \]
and the fact that the function $r\mapsto\Theta(|A|,|B|,r)$
is Lipschitz continuous with norm equal to $\mab$.  \end{proof}

\section{Additive structure of superlevel sets of convolutions}

For any sets $A,B$ define $A+B=\set{a+b: a\in A \text{ and } b\in B}$.
For any positive integers $\lambda,\mu$ and any set $S$ define
\begin{equation}\lambda S-\mu S=\set{\sum_{i=1}^\lambda x_i-\sum_{j=1}^\mu y_j: x_i,y_j\in S}; 
\end{equation}
define $\lambda S$ and $-\mu S$ by replacing the appropriate sums by zero.

The following result provides a criterion for a set to be contained in 
an interval of only slightly larger measure.
\begin{proposition} \label{prop:keystone}
Let $A\subset\reals^1$ be a Lebesgue measurable set with finite, positive measure.
If $|A+A|<3|A|$ then $A$ is contained in an interval of length $\le |A+A|-|A|$.
\end{proposition}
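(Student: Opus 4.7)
After translating to set the essential infimum of $A$ equal to $0$, let $L$ denote the essential supremum, so that $A\subset[0,L]$ modulo a null set. The desired conclusion is equivalent to $|A+A|\ge L+|A|$. A first step is the baseline Brunn-Minkowski bound $|A+A|\ge 2|A|$: since $c+A\subset A+A$ for every $c\in A$, taking sequences $c_n^-\to 0^+$ and $c_n^+\to L^-$ with $c_n^\pm\in A$ shows that $(A+A)\cap[0,L]$ and $(A+A)\cap[L,2L]$ each have measure at least $|A|$ and are essentially disjoint.

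To upgrade to $|A+A|\ge L+|A|$ I would incorporate a third translate $c+A$ for an interior $c\in A$, alongside the two endpoint translates. Writing $h(d):=|A\cap(A+d)|$ and applying inclusion-exclusion to the three translates $c_1+A$, $c+A$, $c_3+A$ in the limit $c_1\to 0^+$ and $c_3\to L^-$, the cross term $h(c_3-c_1)$ tends to $0$ and the triple intersection vanishes (being essentially concentrated at $\{L\}$). This yields the pointwise lower bound $|A+A|\ge 3|A|-[h(c)+h(L-c)]$ for every interior $c\in A$. It therefore suffices to exhibit some $c\in A$ with $h(c)+h(L-c)\le 2|A|-L$.

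A preliminary observation is that the hypothesis $|A+A|<3|A|$ actually forces $|A|\ge L/2$: otherwise $A$ must be essentially concentrated in well-separated endpoint pieces, and a direct computation shows $|A+A|\ge 3|A|$. Hence $2|A|-L\ge 0$, and the target bound $h(c)+h(L-c)\le 2|A|-L$ is at least consistent. To produce the required $c$ I would either (a) argue by contradiction: assume $h(c)+h(L-c)>2|A|-L$ for every $c\in A$ and derive $|A+A|\ge 3|A|$ from a more refined multi-translate inclusion-exclusion or a Pl\"unnecke-Ruzsa-type sumset inequality; or (b) dissect $A=(A\cap[0,s])\cup(A\cap[s,L])$ at a suitable threshold $s$ and combine Brunn-Minkowski bounds for $A\cap[0,s]+A\cap[0,s]\subset[0,2s]$, $A\cap[s,L]+A\cap[s,L]\subset[2s,2L]$, and the cross sumset $A\cap[0,s]+A\cap[s,L]\subset[s,s+L]$, optimizing over $s$.

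The main obstacle is the key estimate itself. A naive averaging of $h(c)+h(L-c)$ over $c\in A$ (or over $c\in[0,L]$) yields only a bound of $|A|^2/L$, coming from $\int_0^L h\,dc=|A|^2/2$, which exceeds $2|A|-L$ whenever $|A|<L$; averaging alone is therefore insufficient. The argument must exploit the hypothesis $|A+A|<3|A|$ in a structural way. This is precisely the content of a continuum adaptation of Freiman's classical $3k-4$ theorem, and it supplies the sharp factor $3$ appearing in the hypothesis.
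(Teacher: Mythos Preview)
Your write-up is not a proof; it is a proof sketch that explicitly leaves the decisive step open. You correctly reduce the conclusion $|A+A|\ge L+|A|$ to finding some $c\in A$ with $h(c)+h(L-c)\le 2|A|-L$, where $h(d)=|A\cap(A+d)|$. But you never establish that such a $c$ exists. Your averaging computation shows (as you note) that the na\"ive approach fails, and neither of the two strategies (a), (b) you list is carried out. The final paragraph then appeals to ``a continuum adaptation of Freiman's classical $3k-4$ theorem'' as the missing ingredient --- but that continuum $3k-4$ statement \emph{is} Proposition~\ref{prop:keystone}, the very result you are asked to prove. This is circular.

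A second gap is your ``preliminary observation'' that $|A+A|<3|A|$ forces $|A|\ge L/2$. You justify this by saying that otherwise $A$ is ``essentially concentrated in well-separated endpoint pieces,'' but that is not what $|A|<L/2$ implies; $A$ could have substantial mass in the interior of $[0,L]$. In fact the inequality $L<2|A|$ is already a (weaker) consequence of the proposition you are proving, so it cannot be assumed gratis.

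For comparison, the paper's proof takes a completely different route: it discretizes $A$ at scale $\varepsilon$ to produce a finite set $\scripta\subset\integers$ (the indices of $\varepsilon$-intervals on which $A$ has density $\ge 1-\delta$), shows $\#(\scripta+\scripta)\le\varepsilon^{-1}|A+A|$, and then invokes Freiman's discrete $3k-4$ theorem to place $\scripta$ in a short arithmetic progression; a separate argument forces the step to be $1$, yielding an interval. The key point is that the hard additive-combinatorics input is imported from the discrete world as a black box, whereas your approach would require reproving that input from scratch in the continuum.
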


The proof is a straightforward reduction to a corresponding result for sums of finite sets
due to Freiman \cite{freiman}. It is deferred to \S\ref{section:keystone}.

Proposition~\ref{prop:keystone} is the only element of our analysis which does not extend
in a straightforward way to higher dimensions. Thus in order to establish the analogue of
Theorem~\ref{mainthm} in all dimensions, it would suffice to establish the analogue of this Proposition.

Let $U,V\subset\reals^1$ be Lebesgue measurable sets with finite measures.  Then
$|U\,\bigtriangleup\, V| + 2|U\cap V| = |U|+|V|$, and
$\norm{\one_U-\one_V}_1 = |U\,\bigtriangleup\, V|$. Therefore
\begin{equation} \norm{\one_U-\one_V}_1+2|U\cap V| = |U|+|V|.  \end{equation}
The triangle inequality for the $L^1$ norm has the following consequence.

\begin{lemma} \label{lemma:alphabeta1}
Let $A,B\subset\reals$ be measurable sets with finite, positive measures.
For $0<t<\min(|A|,|B|)$, consider the superlevel sets $S_t= \set{x\in \reals:\one_A*\one_B(x)> t}$ 
of the convolution product $\one_A*\one_B$.
Let $k$ be any positive integer, and let $\alpha_i>0$ for $1\le i\le 2k+1$.  Define $\beta$ by
\begin{equation}
\big(\beta-\tfrac{|A|+|B|}2\big) = \sum_{i=1}^{2k+1}\Big(\alpha_i -\tfrac{|A|+|B|}2\Big).
\end{equation}
Then 
\begin{equation} \label{eq:superlevelsetsums}
S_{\alpha_1}-S_{\alpha_2}+S_{\alpha_3}-S_{\alpha_4}+\cdots + S_{\alpha_{2k+1}} 
\subset S_\beta. \end{equation}
\end{lemma}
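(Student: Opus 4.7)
The plan is to induct on $k$, concentrating the real work in the base case $k=1$: I will show that if $(\one_A*\one_B)(x_i)>\alpha_i$ for $i=1,2,3$, then $(\one_A*\one_B)(x_1-x_2+x_3)>\alpha_1+\alpha_2+\alpha_3-|A|-|B|$. The inductive step will be bookkeeping: writing $y_k=x_1-x_2+\cdots+x_{2k+1}$, one has $y_k=y_{k-1}-x_{2k}+x_{2k+1}$, and applying the base case to the triple $(y_{k-1},x_{2k},x_{2k+1})$ with level parameters $(\beta_{k-1},\alpha_{2k},\alpha_{2k+1})$---where $\beta_{k-1}=\sum_{i=1}^{2k-1}\alpha_i-(k-1)(|A|+|B|)$ is furnished by the inductive hypothesis---will immediately give $y_k\in S_\beta$ with $\beta=\sum_{i=1}^{2k+1}\alpha_i-k(|A|+|B|)$.

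For the base case I will use two applications of the set-theoretic triangle inequality $|U\cap W|\ge|U\cap V|+|V\cap W|-|V|$ for measurable sets of finite measure, which follows directly from $\|\one_U-\one_W\|_1\le\|\one_U-\one_V\|_1+\|\one_V-\one_W\|_1$ combined with the identity $\|\one_X-\one_Y\|_1=|X|+|Y|-2|X\cap Y|$ highlighted just before the lemma. Employing $(\one_A*\one_B)(x)=|A\cap(x-B)|$ throughout, I would first take $U=x_2-B$, $V=A$, $W=x_3-B$ to obtain $|(x_2-B)\cap(x_3-B)|>\alpha_2+\alpha_3-|A|$, and identify this by translation with $|B\cap(B+x_2-x_3)|$. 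Next I would rewrite $(\one_A*\one_B)(x_1-x_2+x_3)=|A\cap(x_1-x_2+x_3-B)|$ via the measure-preserving substitution $a\mapsto x_1-a$ as $|(x_1-A)\cap(B+x_2-x_3)|$, and apply the triangle inequality a second time with $U=x_1-A$, $V=B$, $W=B+x_2-x_3$ to conclude
\[(\one_A*\one_B)(x_1-x_2+x_3)\ge|(x_1-A)\cap B|+|B\cap(B+x_2-x_3)|-|B|>\alpha_1+\alpha_2+\alpha_3-|A|-|B|.\]

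The main obstacle will be spotting the correct pair of substitutions. The crux is that the autocorrelation $|B\cap(B+c)|$ with $c=x_2-x_3$, which is not itself a value of $\one_A*\one_B$, can nonetheless be bounded from below using two of the given superlevel hypotheses via a triangle inequality pivoting on $A$; the second triangle inequality, pivoting on $B$, then closes the loop. Any intermediate lower bound that happens to be nonpositive can be replaced by zero with no loss, and the final inclusion is automatic once $\beta\le 0$ since then $S_\beta=\reals$.
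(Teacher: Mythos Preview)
Your proof is correct and is essentially the paper's argument, recast in intersection language and organized by induction rather than as a single chain. The paper works directly with the $L^1$ distances $\|\one_{A_x}-\one_{\tilde B}\|_1$ (where $A_x=x+A$ and $\tilde B=-B$) and applies the triangle inequality $2k$ times in one pass, alternately pivoting through translates of $A$ and through $\tilde B$; your two pivots (first on $A$, then on $B$) in the case $k=1$ are precisely one cycle of that chain, and your induction supplies the remaining cycles.
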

A corollary, by the one-dimensional Brunn-Minkowski inequality $|U+V|\ge |U|+|V|$, is that
\begin{equation} \big|S_\beta\big|\ge \sum_{i=1}^{2k+1}\big|S_{\alpha_i}\big|.  \end{equation}

\begin{proof}
To prove the inclusion, set $\tilde B=\set{z: -z\in B}$ and $A_x=\set{x+y: y\in A}$.
For any $t>0$, \[\set{x: (\one_A*\one_B)(x)>t} =\set{x:|A_x\cap \tilde B|>t}.\] 
Indeed,
\[\one_A*\one_B(x)=\int \one_A(x-y)\one_B(y)\,dy = \int \one_A(x+y)\one_{\tilde B}(y)\,dy = |A_x\cap \tilde B|.\]

For $x\in S_t$,
\begin{equation}
\norm{\one_{A_x}-\one_{\tilde B}}_1 = |A_x|+|\tilde B|-2|A_x\cap \tilde B| = |A|+|B|-2|A_x\cap \tilde B| <|A|+|B|-2t.
\end{equation}
Therefore by the triangle inequality, if $x\in S_{\alpha_1}$ and $x'\in S_{\alpha_2}$ then
\begin{equation} \norm{\one_{A_x}-\one_{A_{x'}}}_1  <2|A|+2|B|-2\alpha_1-2\alpha_2. \end{equation}
Since $\norm{\one_{A_x}-\one_{A_{x'}}}_1=\norm{\one_{A_{x-x'}}-\one_{A}}_1$,
\begin{equation} \norm{\one_{A_z}-\one_A}_1  <2|A|+2|B|-2\alpha_1-2\alpha_2
\text{ for any $z\in S_{\alpha_1}-S_{\alpha_2}$,}
\end{equation}
In the same way, for any $z\in S_{\alpha_1}-S_{\alpha_2}+S_{\alpha_3}-\cdots + S_{\alpha_{2k+1}}$,
\begin{equation*}
\norm{\one_{A_z}-\one_{\tilde B}}_1<(2k+1)(|A|+|B|)-2\sum_i \alpha_i
\end{equation*}
and consequently
\begin{multline} |A_z\cap \tilde B| =\tfrac12|A|+\tfrac12|B| -\tfrac12 \norm{\one_{A_z}-\one_{\tilde B}}_1 \\
> \tfrac12 |A| + \tfrac12 |B| -\tfrac{2k+1}2 |A|-\tfrac{2k+1}2 |B|+\sum_{i}\alpha_i = \beta.  \end{multline}
\end{proof}

A variant of Lemma~\ref{lemma:alphabeta1} follows from the same reasoning.
If $z\in S_\alpha-S_\alpha$ then $\norm{A_z-A}_1<2(|A|+|B|-2\alpha)$.
For any $z\in kS_\alpha-kS_\alpha$, $\norm{A_z-A}_1<2k(|A|+|B|-2\alpha)$.
Therefore
$|A_z\cap\tilde A| >\tfrac12|A|+\tfrac12|A|-\tfrac12(2k|A|+2k|B|-4k\alpha) = 2k\alpha-(k-1)|A|-k|B|$.
Thus $kS_\alpha-kS_\alpha\subset\set{x: (\one_A*\one_{\tilde A})(x)>\gamma}$ where $\gamma = 2k\alpha-(k-1)|A|-k|B|$.

\begin{corollary} \label{cor:nearlyintervals}
Let $A,B\subset\reals$ be Lebesgue measurable sets with finite, positive measures. 
For $t\ge 0$ define $S_t=\set{x: \one_A*\one_B(x)>t}$.  
Let $k$ be a positive integer,  and suppose that $\eps>0$ satisfies
\begin{equation}\label{eq:epsconstraint} (4k+1)\eps\mab \le |S_\alpha|.  \end{equation}
Let $\alpha\ge 0$.
Set $\beta= (2k+1)\alpha-k|A|-k|B|$, and assume that $\beta\ge 0$.  If  both
\begin{align} |S_\beta|&< |A|+|B|-2\beta+(2k+1)\eps\mab 
\\ |S_\alpha|&>|A|+|B|-2\alpha-\eps\mab \end{align}
then $S_\alpha$ is contained in some interval $I$ satisfying
\begin{equation} |I|<|S_\alpha|+(4k+2)\eps\mab. \end{equation}
Moreover,
\begin{align} |S_\alpha|&< |A|+|B| -2\alpha + (4k+1)\eps\mab  
\\
|S_\beta| &> |A|+|B|-2\beta-(2k+1)\eps\mab.
\end{align}
\end{corollary}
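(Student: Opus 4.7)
The plan is to apply Proposition~\ref{prop:keystone} directly to $S_\alpha$, after extracting from the hypotheses the inequality $|S_\alpha+S_\alpha|<3|S_\alpha|$. The bridge from the convolution-superlevel hypotheses to information about the sumset is Lemma~\ref{lemma:alphabeta1} combined with the one-dimensional Brunn--Minkowski inequality.

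First I would invoke Lemma~\ref{lemma:alphabeta1} with $\alpha_1=\cdots=\alpha_{2k+1}=\alpha$; its defining relation then simplifies to exactly $\beta=(2k+1)\alpha-k|A|-k|B|$, yielding the inclusion $(k+1)S_\alpha-kS_\alpha\subset S_\beta$. Using the identity $|A|+|B|-2\beta=(2k+1)(|A|+|B|-2\alpha)$ together with the lower-bound hypothesis on $|S_\alpha|$, the upper-bound hypothesis on $|S_\beta|$ converts into
\[
|(k+1)S_\alpha - kS_\alpha|\;\le\;|S_\beta|\;<\;(2k+1)|S_\alpha|+(4k+2)\eps\mab.
\]
Next I would apply Brunn--Minkowski through the decomposition $(k+1)S_\alpha-kS_\alpha=(S_\alpha+S_\alpha)+\bigl((k-1)S_\alpha-kS_\alpha\bigr)$: the second summand is a signed sum of $2k-1$ translates of $\pm S_\alpha$, so its measure is at least $(2k-1)|S_\alpha|$. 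Subtracting yields $|S_\alpha+S_\alpha|<2|S_\alpha|+(4k+2)\eps\mab$, and the hypothesis $(4k+1)\eps\mab\le|S_\alpha|$ together with the strict inequalities propagated above delivers $|S_\alpha+S_\alpha|<3|S_\alpha|$. Proposition~\ref{prop:keystone} then produces an interval $I\supset S_\alpha$ with $|I|\le|S_\alpha+S_\alpha|-|S_\alpha|<|S_\alpha|+(4k+2)\eps\mab$, which is the main conclusion.

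The two auxiliary estimates are byproducts of the same chain. Brunn--Minkowski gives $|S_\beta|\ge|(k+1)S_\alpha-kS_\alpha|\ge(2k+1)|S_\alpha|$; combined with the lower bound $|S_\alpha|>|A|+|B|-2\alpha-\eps\mab$, this yields $|S_\beta|>|A|+|B|-2\beta-(2k+1)\eps\mab$. Symmetrically, $(2k+1)|S_\alpha|\le|S_\beta|$ combined with the assumed upper bound on $|S_\beta|$ yields $|S_\alpha|<|A|+|B|-2\alpha+\eps\mab$, which is stronger than the asserted bound with $(4k+1)\eps\mab$. The delicate point I expect to be the main obstacle is the tight book-keeping of strict inequalities in passing from $|S_\alpha+S_\alpha|<2|S_\alpha|+(4k+2)\eps\mab$ to $|S_\alpha+S_\alpha|<3|S_\alpha|$, since the constants $(4k+1)$ and $(4k+2)$ differ by only $\eps\mab$; every strict inequality in the derivation must be preserved in order to make Proposition~\ref{prop:keystone} fire.
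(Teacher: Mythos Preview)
Your proposal is correct and follows essentially the same route as the paper: invoke Lemma~\ref{lemma:alphabeta1} with all $\alpha_i=\alpha$ to get $(k+1)S_\alpha-kS_\alpha\subset S_\beta$, split off $(k-1)S_\alpha-kS_\alpha$ via Brunn--Minkowski to bound $|S_\alpha+S_\alpha|$, then apply Proposition~\ref{prop:keystone}; the auxiliary bounds follow from $(2k+1)|S_\alpha|\le|S_\beta|$ exactly as you indicate. Your tracking of the constants is in fact more careful than the paper's own proof, which writes $(4k+1)$ in one place where $(4k+2)$ is what the chain actually delivers (consistent with the $(4k+2)$ in the statement of the corollary); the strict inequalities in both hypotheses do propagate, so the strict hypothesis of Proposition~\ref{prop:keystone} is met.
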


This conclusion is of interest primarily when $\eps\mab\ll |S_\alpha|$. 
It is trivial unless both $\alpha,\beta$ lie in the range $[0,\min(|A|,|B|))$.
For $k=1$, the only case which will be needed below, this range is nonvacuous if and only if
\begin{equation} \mab<2 \min(|A|,|B|), \end{equation}.

\begin{proof}
By the Brunn-Minkowski inequality,
\begin{align*}
|S_\beta|\ge |S_\alpha+S_\alpha| + |(k-1)S_\alpha - k S_\alpha|
\text{ and }
|(k-1)S_\alpha- kS_\alpha| \ge (2k-1)|S_\alpha|
\end{align*}
so
\begin{align*}
|S_\alpha+S_\alpha|&\le |S_\beta|-(2k-1)|S_\alpha|
\\
&\le 2|A|+2|B|-4\alpha + 4k\eps\mab
\\
&\le 2|S_\alpha|+(4k+1)\eps\mab.
\end{align*}
Since $(4k+1)\eps\mab \le |S_\alpha|)$, it follows from Proposition~\ref{prop:keystone} that 
$S_\alpha$ is contained in some interval $I$ whose length satisfies 
\begin{equation*} |I| < |S_\alpha|+(4k+1)\eps\mab.\end{equation*}

The inclusion $(k+1)S_\alpha-kS_\alpha\subset S_\beta$, together with 
the Brunn-Minkowski inequality, 
imply that $|S_\beta|\ge |(k+1)S_\alpha-kS_\alpha|\ge (2k+1)|S_\alpha|$.
The indicated lower bound for $|S_\beta|$ and upper bound for $|S_\alpha|$
follow from this relation together with the hypothesized upper and lower bounds for these same quantities.
\end{proof}

The Riesz-Sobolev inequality gives integral bounds for the superlevel set measures $|S_t|$,
since the inequality can be reformulated as
\[\int_0^x \big ( \one_A*\one_B)^\star(y)\,dy \le \int_0^x \big(\one_{A^\star}*\one_{B^\star}\big)(y)\,dy 
\text{ for all } x>0,\]
and the left-hand side equals $s|S_s|+\int_{t\ge s}|S_t|\,dt$ where $s=\big(\one_{A^\star}*\one_{B^\star}\big)(x)$.
The following example illustrates the nonexistence of useful upper bounds, in general, 
for the unintegrated quantities $|S_t|$.
Let $\lambda$ be a large positive integer. Choose sets $\scripta,\scriptb\subset\integers$ 
of cardinality $\lambda$, which satisfy $|\scripta+\scriptb|=|\scripta|\cdot|\scriptb|=\lambda^2$.
Define $A=\scripta+[-\tfrac12\lambda^{-1},\tfrac12\lambda^{-1}]\subset\reals$ and
$B=\scriptb+[-\tfrac12\lambda^{-1},\tfrac12\lambda^{-1}]\subset\reals$. 
Then $|A|=|B|=1$. 
Then $|S_t|=|\set{x:(\one_A*\one_B)(x)>t}|$ is equal to $0$ for $t\ge\lambda^{-1}$,
and equals $2\lambda(1-\lambda t)$ for $0<t<\lambda^{-1}$.
For two intervals $\tilde A,\tilde B$ of lengths equal to one, the corresponding
distribution function satisfies $|\tilde S_t|=2(1-t)$ for $t\in(0,1)$.
For all $t<(1+\lambda)^{-1}$, $|S_t|>|\tilde S_t|$; moreover, 
$|S_t|/|\tilde S_t|\asymp \lambda$ as $t\to 0$.

\section{A preliminary inverse Riesz-Sobolev inequality}

The special case in which
one of the three sets appearing in the expression $\langle \one_A*\one_B,\one_C\rangle$ is an interval
is simpler than the general case, but will be an essential step in our analysis.
We treat it here.

\begin{proposition} \label{prop:trivialintervalcase}
Let $K$ be a compact subset of $(0,\infty)$, and let $\eta>0$.
For each $\eps>0$ there exists $\delta>0$, depending also on $\eta,K$,
with the following property.
Let $A,B\subset\reals$ be measurable subsets with finite, positive measures,
and let $I\subset\reals$ be a bounded interval, such that $|A|,|B|,|I|$ all belong to $K$.
Assume further that for any permutation $(a,b,c)$ of $(|A|,|B|,|I|)$, $c\le (1-\eta)(a+b)$.
Suppose finally that
\begin{equation} \langle \one_A*\one_B,\one_I\rangle \ge (1-\delta)
\langle \one_{A^\star}*\one_{B^\star},\one_{I^\star}\rangle. \end{equation}
Then there exists an interval $J\subset\reals$ such that
\begin{equation} |A\bigtriangleup J|<\eps.  \end{equation}
\end{proposition}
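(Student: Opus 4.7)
The approach is proof by contradiction via a compactness-plus-rigidity argument reducing, in the limit, to the equality case of the Riesz-Sobolev inequality. Suppose the conclusion fails for some $\eps>0$: there exist sequences $\delta_n\to 0$ and $(A_n,B_n,I_n)$ with $|A_n|,|B_n|,|I_n|\in K$, satisfying the admissibility margin $\eta$ and the near-extremality
\[
\langle \one_{A_n}*\one_{B_n},\one_{I_n}\rangle\ge(1-\delta_n)\Theta(|A_n|,|B_n|,|I_n|),
\]
but with $|A_n\bigtriangleup J|\ge\eps$ for every interval $J$. By compactness of $K$, pass to a subsequence along which $|A_n|\to a$, $|B_n|\to b$, $|I_n|\to c$, with $(a,b,c)$ still strictly admissible.

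Normalize by translation so that $I_n=I_n^\star=[-|I_n|/2,|I_n|/2]$, leaving one coupled translational degree of freedom. Applying Lemmas~\ref{lemma:Salphalowerbound} and~\ref{lemma:Salphaupperbound} with $E=I_n$ yields $|S_{\alpha_n,A_n,B_n}\bigtriangleup I_n|=o(1)$ for $\alpha_n=(|A_n|+|B_n|-|I_n|)/2$, so the superlevel set $S_{\alpha_n}\subset A_n+B_n$ is essentially confined to a fixed bounded interval. A concentration-compactness argument then produces compensating translations $A_n\mapsto A_n+s_n$, $B_n\mapsto B_n-s_n$ along a further subsequence so that both sequences become tight, and one extracts weak $L^2$ limits $\one_{A_n}\rightharpoonup u$, $\one_{B_n}\rightharpoonup v$ supported in a fixed $[-M,M]$, with $0\le u,v\le 1$, $\int u=a$, $\int v=b$; meanwhile $\one_{I_n}\to\one_W$ strongly in $L^2$ with $W=[-c/2,c/2]$.

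A standard weak-strong convergence argument yields $\langle u*v,\one_W\rangle\ge\Theta(a,b,c)$. Combining with the Riesz-Sobolev inequality for functions and two successive applications of Jensen's inequality via layer-cake gives
\[
\Theta(a,b,c)\le\langle u*v,\one_W\rangle\le\langle u^\star*v^\star,\one_W\rangle\le\langle u^\star*\one_{[-b/2,b/2]},\one_W\rangle\le\Theta(a,b,c),
\]
where each Jensen step exploits concavity of $r\mapsto\int_{-r/2}^{r/2}h$ for a symmetric-decreasing $h$ (its derivative $h(r/2)$ is nonincreasing). Equality therefore holds throughout. In the final Jensen step (taking $h=\one_W*\one_{[-b/2,b/2]}$, a tent function of plateau radius $|c-b|/2$), strict admissibility $a>|c-b|$ provides strict concavity on a range containing $a$ and forces $u^\star=\one_{[-a/2,a/2]}$; feeding this back, the first Jensen step (now with dual function of plateau radius $|c-a|/2$) combined with $b>|c-a|$ forces $v^\star=\one_{[-b/2,b/2]}$. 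Hence $u=\one_U$ and $v=\one_V$ are indicators, and Burchard's theorem applied to the limiting Riesz-Sobolev equality concludes that $U$ and $V$ are intervals.

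Finally, $\|\one_{A_n}\|_{L^2}^2=|A_n|\to a=|U|=\|\one_U\|_{L^2}^2$, so weak $L^2$-convergence combined with norm convergence upgrades to strong $L^2$-convergence, whence $|A_n\bigtriangleup U|\to 0$. Since $U$ is an interval, this contradicts $|A_n\bigtriangleup J|\ge\eps$ for every interval $J$. The principal obstacle is the concentration-compactness localization: with only one translational degree of freedom remaining after fixing $I_n=I_n^\star$, one must verify that a single compensating translation simultaneously keeps both $A_n$ and $B_n$ essentially supported in a bounded region---this is forced by near-extremality, since any mass of $A_n$ or $B_n$ escaping to infinity cannot participate in the convolution mass inside the bounded set $I_n$.
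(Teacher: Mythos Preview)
Your overall architecture---contradiction, normalization, extraction of weak limits, identification of the limits as indicators, then Burchard---matches the paper's proof. The Jensen/layer-cake argument you give for forcing $u^\star,v^\star$ to be indicators is a clean variant of the paper's direct comparison lemma and is fine.

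The genuine gap is in the localization step. Your final sentence asserts that ``any mass of $A_n$ or $B_n$ escaping to infinity cannot participate in the convolution mass inside the bounded set $I_n$,'' but this is false as stated: if a chunk of $A_n$ drifts to $+R$ while a compensating chunk of $B_n$ drifts to $-R$, their convolution lands near $0\in I_n$ and contributes fully. Knowing that $S_{\alpha_n}\approx I_n$ is bounded (your appeal to Lemmas~\ref{lemma:Salphalowerbound}--\ref{lemma:Salphaupperbound}) does not rule this out, since $S_{\alpha_n}$ is only a subset of $A_n+B_n$. With only the single coupled translation $s_n$ available, you cannot recenter two separating bubbles of $A_n$ simultaneously.

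The paper confronts exactly this dichotomy scenario in Lemma~\ref{lemma:nogaps}. If $A_j$ splits as $A_j^-\cup A_j^+$ with a growing gap, one correspondingly splits $\tilde B_j$ and obtains in the limit
\[
\Theta(\alpha,\beta,\gamma)=\Theta(\alpha^-,\beta^-,\gamma)+\Theta(\alpha^+,\beta^+,\gamma),
\]
with $\alpha^\pm>0$, $\alpha^-+\alpha^+=\alpha$, $\beta^-+\beta^+\le\beta$. This identity is then shown to be impossible by realizing the right-hand side as $\langle \one_{\scripti^-\cup\scripti^+}*\one_{\scriptj^-\cup\scriptj^+},\one_I\rangle$ for well-separated interval pairs and invoking Burchard's strict inequality, since $\scripti^-\cup\scripti^+$ is not an interval. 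This strict-subadditivity step is the missing idea in your sketch; a generic ``concentration-compactness'' incantation does not supply it, because the dichotomy alternative must be excluded by a problem-specific inequality, and your stated reason for exclusion is incorrect.
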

This result is not formulated in a scale-invariant way, but via the action of the affine 
group it directly implies a scale-invariant generalization.
Theorem~\ref{mainthm} will later be deduced directly from Proposition~\ref{prop:trivialintervalcase}
and Corollary~\ref{cor:nearlyintervals}. 
Observe that in contrast to the setup of Theorem~\ref{mainthm},
$\one_A*\one_B,\one_I\rangle$ is assumed to be large for only one interval $I$, not for two. 

Fix $\eta,K$.  Proposition~\ref{prop:trivialintervalcase} is equivalent to the assertion
that if $(A_j,B_j,I_j)$ is a sequence of ordered triples satisfying
all of these hypotheses, with a sequence of parameters $\delta_j\to 0$, then there exist intervals $J_j$
such that $|A_j\bigtriangleup J_j|<\eps_j$ where $\eps_j\to 0$ as $j\to\infty$.
We prove this by contradiction.  If there were to exist a sequence for which the conclusion failed, 
then there would necessarily exist a subsequence for which
\begin{equation} (|A_j|,|B_j|,|I_j|)\to (\alpha,\beta,\gamma) \end{equation}
for some $(\alpha,\beta,\gamma)\in K^3$, so we may restrict attention to such a subsequence.\footnote{
The meaning of the symbols $\alpha,\beta$ here is unrelated to their role than in the statement
of Theorem~\ref{mainthm}.} 

Without loss of generality, we may assume that each interval $I_j$ is centered at $0$,
by translating $A_j,B_j,I_j$ by appropriate quantities.
Set $I = [-\tfrac12\gamma,\tfrac12\gamma]$.
Now \[\big| \langle \one_{A_j}*\one_{B_j},\one_I-\one_{I'}\rangle\big|\le C_K |I\bigtriangleup {I'}|\]
for any intervals $I,{I'}$. 
If $I,{I'}$ are centered at $0$, then $|I\bigtriangleup {I'}| = \big|\,|I|-|I'|\,\big|$.
Therefore $|I_j\cap I|\to 0$, and consequently
\begin{equation} \langle \one_{A_j}*\one_{B_j},\one_I\rangle
-\langle \one_{A_j}*\one_{B_j},\one_{I_j}\rangle\to 0 \end{equation} as $j\to\infty$.
Therefore we may replace $I_j$ by $I$ throughout the remainder of the discussion.

\begin{lemma} \label{lemma:nogaps}
There exist a function $\Lambda$ such that $\Lambda(r)\to 0$ as $r\to\infty$,
a sequence $R_j\to\infty$, and a sequence of real numbers $\tau_j$ such that 
\begin{equation} \label{eq:Ajdecaybound}
|A_j\setminus [\tau_j-\rho,\tau_j+\rho] | \le\Lambda(\rho) \text{ for all } \rho\in[0, R_j].  \end{equation}
\end{lemma}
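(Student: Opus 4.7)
The plan is a proof by contradiction: if $A_j$ carries significant mass in two widely separated clusters, then splitting the convolution integral along those clusters forces a quantitatively strict gap in the Riesz-Sobolev inequality, contradicting the hypothesis that $\langle\one_{A_j}*\one_{B_j},\one_I\rangle\to\Theta(\alpha,\beta,\gamma)$. It suffices to show that the deficit
\[W_j(\rho):=|A_j|-\sup_\tau|A_j\cap[\tau-\rho,\tau+\rho]|\]
satisfies $\limsup_{j\to\infty}W_j(\rho)\to 0$ as $\rho\to\infty$. From this quantitative tightness statement, the function $\Lambda$, the sequence $R_j\to\infty$, and a single translate $\tau_j$ (the center of the densest window of radius $R_j$) valid for all $\rho\in[0,R_j]$ follow by a standard diagonal extraction, with $R_j$ chosen to grow slowly enough that the deficit remains controlled.

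Assume for contradiction that $W_{j_k}(\rho_k)\ge 2\mu_0>0$ along subsequences $\rho_k,j_k\to\infty$. Translate $A_{j_k}$ so that the densest window of radius $\rho_k$ is centered at $0$; at least one side (say the right) of the complement of $[-\rho_k,\rho_k]$ carries $A_{j_k}$-mass $\ge\mu_0$. Since $|A_{j_k}|\le K$ is bounded, a pigeonhole argument on the tail yields a cut-interval $[t_k^-,t_k^+]\subset[\rho_k,\infty)$ of length $\gamma+1$ (with $\gamma:=|I|$) such that $|A_{j_k}\cap[t_k^-,t_k^+]|\le\mu_0/4$ while $|A_{j_k}\cap[t_k^+,\infty)|\ge\mu_0/4$; if no such cut can be found the right tail is forced into a bounded region, allowing iteration. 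Setting $A^1:=A_{j_k}\cap(-\infty,t_k^-]$ and $A^2:=A_{j_k}\cap[t_k^+,\infty)$, the densest-window property together with the uniform lower bound $|A_{j_k}|\ge\min K$ gives $\min(|A^1|,|A^2|)$ bounded below by a positive constant depending only on $\mu_0$ and $K$, with both bounded away from $|A_{j_k}|$. The gap of width $\gamma+1$ makes $I-A^1\subset[-\rho_k-\gamma/2,t_k^-+\gamma/2]$ and $I-A^2\subset(-\infty,-t_k^+ +\gamma/2]$ disjoint, so $B^i:=B_{j_k}\cap(I-A^i)$ are disjoint subsets of $B_{j_k}$. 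Since for $a\in A^i$ the condition $a+b\in I$ forces $b\in B^i$, Riesz-Sobolev applied to each piece gives
\[\langle\one_{A^1\cup A^2}*\one_{B_{j_k}},\one_I\rangle=\langle\one_{A^1}*\one_{B^1},\one_I\rangle+\langle\one_{A^2}*\one_{B^2},\one_I\rangle\le\Theta(|A^1|,|B^1|,\gamma)+\Theta(|A^2|,|B^2|,\gamma),\]
with $|B^1|+|B^2|\le|B_{j_k}|$, while the middle piece $A_{j_k}\setminus(A^1\cup A^2)$ contributes at most $(\mu_0/4)\min(|A_{j_k}|,|B_{j_k}|)$.

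The main obstacle is extracting a quantitatively strict gap. A direct application of Riesz-Sobolev to a widely-separated disjoint union of intervals yields the consolidation inequality $\Theta(a_1,b_1,c)+\Theta(a_2,b_2,c)\le\Theta(a_1+a_2,b_1+b_2,c)$, which combined with monotonicity and $|B^1|+|B^2|\le|B_{j_k}|$ bounds the right side by the non-strict quantity $\Theta(|A_{j_k}|,|B_{j_k}|,\gamma)$. To upgrade this to a uniform positive gap I would exploit the strict concavity of $\Theta$ in each of its arguments, visible from \eqref{Thetaexpression} as $\partial_a^2\Theta=-\tfrac12$ in the relevant regime (and analogously in $b$ by symmetry), together with the triangle-inequality margin $\eta$ that keeps $(\alpha,\beta,\gamma)$ uniformly away from the degenerate boundary where $\Theta(a,b,c)=\min(ab,ac,bc)$ and where the partial derivatives $\partial_a\Theta=(b+c-a)/2$ etc.\ vanish. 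With $|A^1|,|A^2|$ uniformly bounded away from $0$ and from $|A_{j_k}|$, a short case analysis (separating $b_1=0$, $b_1>0$) produces a uniform $\kappa=\kappa(\mu_0,\eta,K)>0$ with $\Theta(|A^1|,|B^1|,\gamma)+\Theta(|A^2|,|B^2|,\gamma)\le\Theta(|A_{j_k}|,|B_{j_k}|,\gamma)-\kappa$. Choosing $\mu_0$ small enough that the middle-piece contribution $(\mu_0/4)\min(|A_{j_k}|,|B_{j_k}|)$ is absorbed by $\kappa/2$ and passing $k\to\infty$ contradicts $\langle\one_{A_{j_k}}*\one_{B_{j_k}},\one_I\rangle\to\Theta(\alpha,\beta,\gamma)$, completing the proof.
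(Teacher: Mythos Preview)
Your overall strategy coincides with the paper's: argue by contradiction, split $A_j$ into two far-separated pieces across a gap wider than $|I|$, decompose $B_j$ accordingly so that the cross terms vanish, bound each piece by Riesz--Sobolev, and then invoke a strict subadditivity
\[
\Theta(a_1,b_1,\gamma)+\Theta(a_2,b_2,\gamma)<\Theta(a_1+a_2,b_1+b_2,\gamma)
\]
when $a_1,a_2>0$ and $(a_1+a_2,b_1+b_2,\gamma)$ satisfies the Burchard triangle condition. The paper obtains this strict inequality in one stroke from Burchard's inverse theorem, by realizing the left side as $\langle \one_{\scripti^+\cup\scripti^-}*\one_{\scriptj^+\cup\scriptj^-},\one_I\rangle$ with $\scripti^+\cup\scripti^-$ a genuine disjoint union of intervals; you instead attempt a direct argument from the explicit formula for $\Theta$. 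That is a legitimate alternative, and it has the virtue of avoiding the appeal to Burchard, but it is not as short as you suggest: in the regime of \eqref{Thetaexpression} one computes $\Theta(a,b,c)=\tfrac12(ab+ac+bc)-\tfrac14(a^2+b^2+c^2)$, whose Hessian in $(a,b)$ is only negative \emph{semi}definite (it is linear along the diagonal direction), so ``strict concavity in $a$'' alone does not yield the gap. A genuine case analysis across the several regimes of $\Theta$ is needed; the paper's route via Burchard is cleaner.

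There are also two concrete gaps in the argument as written. First, your claim that the densest-window property forces $\min(|A^1|,|A^2|)$ to be bounded below by a constant depending only on $\mu_0$ and $K$ is not justified and is false in general: if $A_{j_k}$ consists of $N$ widely separated pieces of equal mass with the leftmost one at the origin, the densest window (hence $|A^1|$) has mass $|A_{j_k}|/N$, which can be made arbitrarily small while the right tail still carries mass $\ge\mu_0$. Second, the closing sentence ``choosing $\mu_0$ small enough'' is circular, since $\mu_0$ is fixed by the contradiction hypothesis and $\kappa$ depends on $\mu_0$. Both issues disappear if you follow the paper and pass to a further subsequence so that $|A_j\cap L_j|\to 0$ (rather than merely $\le\mu_0/4$) and $|A_j^\pm|\to\alpha^\pm$ with both limits strictly positive; this is the standard concentration--compactness dichotomy, and it lets you compare limits directly without having to balance $\kappa(\mu_0)$ against an $O(\mu_0)$ error.
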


\begin{proof}
If not, then after replacing the sequence of pairs $(A_j,B_j)$ by an appropriate subsequence,
there exists a sequence of bounded intervals $L_j=[\lambda_j^-,\lambda_j^+]$ such that
\begin{align*} &|A_j\cap L_j|\to 0, 
\\ &|L_j|\to\infty, \\ &\lim_{j\to\infty} |A_j^-| =\alpha^->0, 
\\ &\lim_{j\to\infty} |A_j^+|=\alpha^+>0, \end{align*}
where $A_j^-=A_j\cap (-\infty,\lambda_j^-]$ and
$A_j^+=A_j\cap [\lambda_j^+,\infty)$.
Since $\alpha^-+\alpha^+=\alpha$, both $\alpha_-,\alpha_+$ are strictly less than $\alpha$. 

Denote by $\tilde S$ the reflection of a subset $S\subset\reals$ about $0$.  
Decompose $\tilde B_j$ as the disjoint union
\[\tilde B_j=\tilde B_j^+\cup \tilde B_j^-\cup(\tilde B_j\cap [\lambda_j^-+|I|,\lambda_j^+-|I|]) \]
where 
\[ \tilde B_j^-=\tilde B_j\cap (-\infty,\lambda_j^-+|I|)\,\ \  \tilde B_j^+=\tilde B_j\cap (\lambda_j^+-|I|,\infty).\]
Write \[\langle \one_{A_j}*\one_{B_j},\one_I\rangle =\langle \one_{A_j}*\one_I,\one_{\tilde B_j}\rangle \]
where $\tilde B_j$ is the reflection of $B_j$ about $0$.
Then since $|A_j|,|B_j|$ belong to the fixed compact set $K$,
\begin{align*}\langle \one_{A_j}*\one_{I},\one_{\tilde B_j}\rangle
&= \langle (\one_{A_j^+}+\one_{A_j^-})*\one_{I},\one_{\tilde B_j}\rangle +O(|A_j\cap L_j|)
\\
&= \langle \one_{A_j^+}*\one_{I},\one_{\tilde B_j^+}\rangle 
+ \langle \one_{A_j^-}*\one_{I},\one_{\tilde B_j^-}\rangle 
+O(|A_j\cap L_j|)
\\ &\le\Theta(|A_j^-|,|B_j^-|,|I|) +\Theta(|A_j^+|,|B_j^+|,|I|) +O(|A_j\cap L_j|)
\\ &\to
\Theta(\alpha^-,\beta^-,\gamma) + \Theta(\alpha^+,\beta^+,\gamma)
\end{align*}
as $j\to\infty$.  The last inequality is justified by the Riesz-Sobolev inequality.  On the other hand, 
\begin{equation} \Theta(|A_j|,|B_j|,|I|)\ge
 \langle \one_{A_j}*\one_{B_j},\one_{I}\rangle\ge (1-\delta_j)\Theta(|A_j|,|B_j|,|I|)
\to\Theta(\alpha,\beta,\gamma).  \end{equation}
The left-hand side converges to $\Theta(\alpha,\beta,\gamma)$.  Therefore
\begin{equation} \label{impossibleidentity}
\Theta(\alpha,\beta,\gamma) = \Theta(\alpha^-,\beta^-\gamma) + \Theta(\alpha^+,\beta^+,\gamma),  \end{equation}
with $\alpha^-+\alpha^+=\alpha$, $\beta^-+\beta^+\le\beta$, and $\alpha^\pm\ne 0$.

This is impossible. Indeed, the right-hand side of \eqref{impossibleidentity} has the following interpretation.
Consider intervals $\scripti^\pm,\scriptj^\pm$ of lengths $\alpha^\pm,\beta^\pm$
respectively, such that $\distance(\scripti^-,\scripti^+)$ is sufficiently
large, $\scriptj^+$ has the same center as $\scripti^+$, and $\scriptj^-$ has the same center as $\scripti^-$.  Then
\begin{align*} \langle \one_{\scripti^+\cup\scripti^-}*\one_{\scriptj^+\cup\scriptj^-},\one_{I}\rangle
&= \langle \one_{\scripti^+}*\one_{\scriptj^+},\one_{I}\rangle
+\langle \one_{\scripti^-}*\one_{\scriptj^-},\one_{I}\rangle  
\\
&=\Theta(\alpha^+,\beta^+,\gamma)+\Theta(\alpha^-,\beta^-,\gamma).  \end{align*}
But 
\begin{equation} \langle \one_{\scripti^+\cup\scripti^-}*\one_{\scriptj^+\cup\scriptj^-},\one_{I}\rangle
<\Theta(|\scripti^+\cup\scripti^-|,|\scriptj^+\cup\scriptj^-|,|I|)= \Theta(\alpha,\beta,\gamma)\end{equation}
by Burchard's inverse theorem, since $\scripti^+\cup\scripti^-$ is not an interval. 
This contradicts \eqref{impossibleidentity}.  \end{proof}

So far, we have shown that the sets $A_j$ satisfy the decay bounds \eqref{eq:Ajdecaybound}.
The same reasoning applies to the sets $B_j$. By replacing $A_j$ by $A_j-\tau_j$,
we may assume henceforth that $\tau_j=0$.
One cannot simultaneously translate $A_j,B_j$ by independent amounts without disturbing the
hypothesis that $I$ is centered at $0$. But from that restriction on $I$, it now follows easily 
from the decay bounds for $B_j$ that the sequence $\tau'_j$ remains uniformly bounded,
hence that $B_j$ satisfies the same bounds with $\tau'_j\equiv 0$; otherwise necessarily
$\langle \one_{A_j}*\one_{B_j},\one_{I}\rangle\to 0$ as $j\to\infty$ for some subsequence of the indices $j$.

Next pass to a further subsequence, for which weak limits exist in $L^2$:
\begin{equation*} \one_{A_j}\rightharpoonup f \text{ and }  \one_{B_j}\rightharpoonup g\end{equation*}
for certain functions $f,g\in L^2(\reals)$.
By this we mean that for any test function $\varphi\in L^2(\reals)$, 
$\langle \one_{A_j},\varphi\rangle\to \langle f,\varphi\rangle$ as $j\to\infty$. 
Because $|A_j|,|B_j|$ belong to the compact set $K$, some subsequence must converge in this sense.
The uniform decay estimate \eqref{eq:Ajdecaybound}, in conjunction with the normalization $\tau_j\equiv 0$,
preclude the escape to spatial infinity of any mass, so $\norm{f}_1=\lim_{j\to\infty}|A_j|=\alpha$. 
Likewise, $\norm{g}_1=\lim_{j\to\infty}|B_j|=\beta$.  Moreover, $\norm{f}_\infty\le 1$ and $\norm{g}_\infty\le 1$.

\begin{lemma}
\begin{equation} \langle \one_{A_j}*\one_{B_j},\one_I\rangle\to \langle f*g,\one_I\rangle 
\ \ \text{ as $j\to\infty$.} \end{equation}
\end{lemma}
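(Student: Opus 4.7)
The plan is to rewrite the inner product with one of the two variable sets absorbed into a test function, and then combine the weak convergence in $L^2$ with the tightness provided by Lemma~\ref{lemma:nogaps} to pass to the limit. Concretely, I would set
\[
h_j(x):=|B_j\cap (I-x)|=\int \one_{B_j}(y)\,\one_{I-x}(y)\,dy,
\]
so that $\langle \one_{A_j}*\one_{B_j},\one_I\rangle = \int_{\reals} \one_{A_j}(x)\,h_j(x)\,dx$, and define $h(x):=\int g(y)\,\one_{I-x}(y)\,dy$, which gives $\langle f*g,\one_I\rangle=\int f\,h$. For every fixed $x$, weak convergence $\one_{B_j}\rightharpoonup g$ against the $L^2$ test function $\one_{I-x}$ immediately yields $h_j(x)\to h(x)$.

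Next I would promote this to uniform convergence on compact sets. The elementary estimate $|h_j(x)-h_j(x')|\le |(I-x)\bigtriangleup (I-x')|\le 2|x-x'|$ together with the uniform bound $0\le h_j\le |I|$ makes $\{h_j\}$ equicontinuous and uniformly bounded, so Arzel\`a--Ascoli upgrades the pointwise convergence to uniform convergence on each compact subset of $\reals$. Applying Lemma~\ref{lemma:nogaps} to the sequence $B_j$ (with translation parameter $0$, as the preceding discussion has justified) produces $\Lambda(\rho)\to 0$ and $R_j\to\infty$ with $|B_j\setminus[-\rho,\rho]|\le \Lambda(\rho)$ for $\rho\in[0,R_j]$. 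Since $I$ is bounded and centered at $0$, for $|x|$ sufficiently large one has $(I-x)\cap [-|x|/2,|x|/2]=\emptyset$, hence $h_j(x)\le \Lambda(|x|/2)$ whenever $|x|/2\le R_j$. Thus the $h_j$ enjoy uniform decay at infinity, and the same is true of the sequence $\one_{A_j}$ by another application of Lemma~\ref{lemma:nogaps}.

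The conclusion then follows by a standard tail-plus-main-term split. Given $\eps>0$, I would choose $R$ large enough that $|I|\Lambda(R)<\eps$ and $\int_{|x|>R}|f|\cdot\|h\|_\infty<\eps$. For all $j$ large enough that $R\le R_j$, the tightness estimate gives
\[
\int_{|x|>R}\one_{A_j}(x)\,h_j(x)\,dx \;\le\; |I|\cdot|A_j\setminus[-R,R]| \;\le\; |I|\Lambda(R) \;<\;\eps,
\]
with a similar tail bound for $\int f\,h$. On $[-R,R]$, the uniform convergence $h_j\to h$ gives $\int_{-R}^R \one_{A_j}(h_j-h)\,dx\to 0$, while weak convergence of $\one_{A_j}$ tested against the $L^2$ function $h\cdot\one_{[-R,R]}$ gives $\int_{-R}^R \one_{A_j}\,h\,dx\to \int_{-R}^R f\,h\,dx$. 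Summing the four contributions yields the lemma.

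The main obstacle is that weak $L^2$ convergence is not preserved under convolution, so one cannot naively pass to the limit in both variables at once. The device of freezing the $B_j$ variable inside $h_j$, upgrading the resulting family to uniform convergence on compacta via Arzel\`a--Ascoli, and then handling the tail through Lemma~\ref{lemma:nogaps}, is the crucial mechanism that decouples the two weak limits.
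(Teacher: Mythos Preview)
Your argument is correct and complete. It differs from the paper's route, which instead sandwiches $\one_I$ between continuous compactly supported functions $\psi_i^{\pm}$, rewrites $\langle \one_{A_j}*\one_{B_j},\psi_i^{\pm}\rangle=\langle \one_{A_j}*\tilde\psi_i^{\pm},\one_{\tilde B_j}\rangle$, asserts that $\one_{A_j}*\tilde\psi_i^{\pm}\to f*\tilde\psi_i^{\pm}$ \emph{strongly} in $L^2$, and then pairs this strong limit with the weak convergence of $\one_{\tilde B_j}$; finally $i\to\infty$. The paper's strong $L^2$ convergence claim is not a consequence of weak convergence alone (translation to infinity gives the obvious counterexample) and implicitly relies on the tightness from Lemma~\ref{lemma:nogaps}, exactly as you do. Your version has two advantages: it exploits directly the Lipschitz continuity of $x\mapsto |B_j\cap(I-x)|$, so no mollification of $\one_I$ is needed, and it makes the dependence on Lemma~\ref{lemma:nogaps} explicit rather than hidden inside a compactness statement. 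The paper's version is more compressed and phrases the passage to the limit as ``strong $\times$ weak'' in $L^2$, which is conceptually tidy once the strong convergence is granted. Either approach is fine; yours is arguably the more transparent of the two.
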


\begin{proof}
Let $\psi_i^\pm$ be continuous functions with ranges in $[0,1]$ such that
$\psi_i^-<\one_I<\psi_i^+$, $\psi_i^\pm$ is supported within distance $i^{-1}$ of $I$,
and $\psi_i^\pm\to\one_I$ from above and from below, respectively.
As $j\to\infty$,
$\langle \one_{A_j}*\one_{B_j},\psi_i^\pm\rangle
=\langle \one_{A_j}*\tilde\psi^\pm\rangle,\one_{\tilde B_j}\rangle \to \langle f*g,\psi_i^\pm\rangle$
for every $i$, since weak convergence of the sequence $\one_{A_j}$ in $L^2$ implies
strong $L^2$ convergence of the sequence $\one_{A_j}*\psi^\pm_i$  for fixed $i$.

Finally, let $i\to\infty$ and use the comparison
\[ \langle \one_{A_j}*\one_{B_j},\psi_i^-\rangle \le \langle \one_{A_j}*\one_{B_j},\one_I\rangle
\le\langle \one_{A_j}*\one_{B_j},\psi_i^+\rangle  \]
along with the corresponding upper and lower bounds for $\langle f*g,\one_I\rangle$.
\end{proof}

Therefore $\langle f*g,\one_I\rangle = \Theta(\alpha,\beta,\gamma)$.
Recall that $\norm{f}_\infty\le 1$, $\norm{g}_\infty\le 1$, $\norm{f}_1=\alpha$ and $\norm{g}_1=\beta$. 
The next lemma guarantees that under these circumstances, 
$f,g$ are indicator functions of sets of measures $\alpha,\beta$ respectively.

\begin{lemma}
Let $\scripta,\scriptb,I\subset\reals$ 
be intervals centered at $0$ of finite, positive lengths $|\scripta|,|\scriptb|,|I|$ which satisfy 
\begin{equation}\label{Btype} \max(|\scripta|,|I|)-\min(|\scripta|,|I|)< |\scriptb| < |\scripta|+|I|.\end{equation}
Let $f,g\in L^1(\reals)$ be nonnegative functions
satisfying $\norm{f}_\infty,\norm{g}_\infty\le 1$,
$\norm{f}_1=|\scripta|$ and $\norm{g}_1=|\scriptb|$.
Then  $\langle f*g,\one_I\rangle \le \langle\one_\scripta*\one_\scriptb,\one_I\rangle$.
Moreover, equality can  hold only if $f,g$ are indicator functions of sets of measures $|\scripta|,|\scriptb|$ respectively.
\end{lemma}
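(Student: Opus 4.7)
The plan is to combine the Riesz-Sobolev inequality with two successive applications of the bathtub principle. Since $I$ is centered at $0$ we have $\one_I = \one_{I^\star}$, so \eqref{eq:RSforfns} immediately yields $\langle f*g,\one_I\rangle \le \langle f^\star*g^\star,\one_I\rangle$. Because $f$ and $f^\star$ have the same distribution, once we establish $\langle f^\star*g^\star,\one_I\rangle \le \Theta(|\scripta|,|\scriptb|,|I|)$ with equality only when $f^\star=\one_{\scripta}$ and $g^\star=\one_{\scriptb}$, the full conclusion follows: a $\{0,1\}$-valued $f^\star$ forces $f$ to be an indicator function of a set of measure $|\scripta|$, and similarly for $g$. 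So assume $f=f^\star$ and $g=g^\star$ henceforth.

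Set $H:=f*\one_I$; this is symmetric nonincreasing, as the layer-cake representation expresses it as a nonnegative integral of the symmetric tent functions $\one_{\{f>s\}}*\one_I$. Rewriting $\langle f*g,\one_I\rangle = \int g\cdot H$ and applying the bathtub principle to $g$ under the constraints $0\le g\le 1$ and $\int g=|\scriptb|$, we obtain $\int g\cdot H\le \int_{\scriptb} H$. Since $\one_I$ is symmetric, a swap of convolution factors gives $\int_{\scriptb} H = \int_I(f*\one_{\scriptb}) = \int f\cdot H'$, where $H':=\one_{\scriptb}*\one_I$ is the familiar tent function with plateau of length $\bigl||\scriptb|-|I|\bigr|$ and support of length $|\scriptb|+|I|$. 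A second application of the bathtub principle to $f$ yields $\int f\cdot H'\le \int_{\scripta} H' = \langle\one_{\scripta}*\one_{\scriptb},\one_I\rangle = \Theta(|\scripta|,|\scriptb|,|I|)$, establishing the inequality.

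For the equality case, the hypothesis \eqref{Btype} encodes the strict triangle inequalities $\bigl||\scripta|-|I|\bigr|<|\scriptb|<|\scripta|+|I|$ and, as one verifies, the symmetric pair $\bigl||\scriptb|-|I|\bigr|<|\scripta|<|\scriptb|+|I|$. The latter places $|\scripta|$ strictly inside the ``slope region'' of the tent $H'$, so the super-level set of $H'$ of measure $|\scripta|$ is the centered interval $\scripta$ with boundary of measure zero; the equality case of the second bathtub step then forces $f=\one_{\scripta}$. Substituting this back, $H=\one_{\scripta}*\one_I$ is itself a tent, and by the first pair of triangle inequalities $|\scriptb|$ lies strictly in its slope region, so the equality case of the first bathtub step forces $g=\one_{\scriptb}$. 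The only nontrivial routine step is to confirm that both pairs of strict triangle inequalities follow from \eqref{Btype}, which is an elementary case analysis; everything else is bookkeeping.
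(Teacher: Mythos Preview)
Your proof is correct and follows essentially the same approach as the paper: reduce to symmetric nonincreasing $f,g$ via Riesz--Sobolev, then apply the bathtub principle twice (you do $g$ first, then $f$; the paper does $f$ first, then $g$, which is immaterial). Your equality analysis is in fact slightly more explicit than the paper's, since you verify that the symmetric triangle inequalities $\bigl||\scriptb|-|I|\bigr|<|\scripta|<|\scriptb|+|I|$ follow from \eqref{Btype}, whereas the paper simply invokes ``by symmetry'' for the second equality case.
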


\begin{proof}
By the Riesz-Sobolev inequality,
$\langle f*g,\one_I\rangle \le \langle f^\star*g^\star,\one_I\rangle$.
Therefore it suffices to prove the result under the additional assumption that
$f=f^\star$ and $g=g^\star$,  which we assume henceforth.

Both $\one_\scripta,f$ are symmetric nonincreasing, and $f(x)\le \one_\scripta(x)$ for every $x\in\reals$,
so for any symmetric nonincreasing function $h$, $\int fh\le \int \one_\scripta h$.
Since $g*\one_I$ is a symmetric nonincreasing function,
\[\langle f*g,\one_I\rangle = \langle f,g*\one_I\rangle\le \int \one_\scripta\cdot (g*\one_I) =
\langle \one_\scripta*g,\one_I\rangle.\]
Repeating the argument with $f,g$ replaced by $g,\one_\scripta$ respectively
gives $\langle f*g,\one_I\rangle\le \langle \one_\scripta*\one_\scriptb,\one_I\rangle$.
Therefore
\[ \langle f*g,\one_I\rangle\le \langle \one_\scripta*g,\one_I\rangle 
\le\langle \one_\scripta*\one_\scriptb,\one_I\rangle.  \]

If $\langle f*g,\one_I\rangle = \langle \one_\scripta*\one_\scriptb,\one_I\rangle$, then the preceding inequality forces
$\langle \one_\scripta*g,\one_I\rangle = \langle \one_\scripta*\one_\scriptb,\one_I\rangle$.
Write $\langle \one_\scripta*g,\one_I\rangle$ as $\langle g,h\rangle$ where
$h=\one_\scripta*\one_I$ is symmetric nonincreasing,
and is strictly decreasing on the set of all $x$ which satisfy 
$\max(|\scripta|,|I|)-\min(|\scripta|,|I|)< 2|x|< |\scripta|+|I|$.
Under the assumption \eqref{Btype},
it is apparent that among all symmetric nonincreasing functions $g$ which satisfy
$\norm{g}_1=|\scriptb|$ and $\norm{g}_\infty\le 1$, 
$\int  gh$ is maximimized when $g=\one_\scriptb$, and in no other cases.
Therefore $g=\one_\scriptb$. By symmetry, $f=\one_\scripta$.
\end{proof}

We have shown so far that there are sets $A,B$ such that $\one_{A_j}\rightharpoonup\one_A$ where $|A_j|\to |A|$,
and likewise  $\one_{B_j}\rightharpoonup\one_B$ and $|B_j|\to |B|$.

\begin{lemma}
Let $E_j,E\subset\reals^d$ be Lebesgue measurable sets. Suppose that as $j\to\infty$, $|E_j|\to|E|<\infty$
and $\one_{E_j}\rightharpoonup \one_E$.
Then $|E_j\bigtriangleup E|\to 0$ as $j\to\infty$.
\end{lemma}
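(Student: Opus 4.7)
The plan is to exploit the identity
\begin{equation*} |E_j\bigtriangleup E| = |E_j| + |E| - 2|E_j\cap E|, \end{equation*}
which follows from the inclusion-exclusion $|E_j|+|E| = |E_j\cup E|+|E_j\cap E|$ together with $|E_j\cup E| = |E_j\cap E|+|E_j\bigtriangleup E|$. Equivalently, since $\one_{E_j}$ and $\one_E$ are $\{0,1\}$-valued, the pointwise relation $|\one_{E_j}-\one_E| = \one_{E_j}+\one_E - 2\one_{E_j}\one_E$ holds, and integrating gives the same formula.

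Next I would recognize the cross-term as a pairing: $|E_j\cap E| = \int \one_{E_j}\one_E = \langle \one_{E_j},\one_E\rangle$. Since $|E|<\infty$, the function $\one_E$ lies in $L^2(\reals^d)$, so it is an admissible test function against which the weak $L^2$ convergence $\one_{E_j}\rightharpoonup \one_E$ may be applied. This yields
\begin{equation*} \langle \one_{E_j},\one_E\rangle \longrightarrow \langle \one_E,\one_E\rangle = |E| \text{ as } j\to\infty.  \end{equation*}

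Combining this with the hypothesis $|E_j|\to |E|$ in the identity above gives
\begin{equation*} |E_j\bigtriangleup E| \longrightarrow |E| + |E| - 2|E| = 0, \end{equation*}
which is the desired conclusion. I do not anticipate any genuine obstacle; the only care required is the verification that $\one_E \in L^2$, which is where the hypothesis $|E|<\infty$ is used, and a brief clarification of what is meant by weak convergence in the context (any of $L^2$ convergence, or the weaker statement that integration against each compactly supported continuous function converges, suffices, since $\one_E$ can be approximated in $L^2$ by continuous compactly supported functions when the bound $|E_j|\to |E|$ controls mass at infinity).
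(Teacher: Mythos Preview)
Your argument is correct and is in fact cleaner than the paper's. You pair directly against $\one_E\in L^2$ and invoke the identity $|E_j\bigtriangleup E|=|E_j|+|E|-2\langle\one_{E_j},\one_E\rangle$, which immediately gives the limit zero. The paper instead sandwiches $E$ between a compact set $K$ and an open set $\scripto$ with $|\scripto\setminus K|<\eps$, tests against a continuous cutoff $\varphi$ supported in $\scripto$ and equal to $1$ on $K$, and deduces $\liminf_j |E_j\cap E|\ge |E|-2\eps$. That route only ever pairs against continuous compactly supported functions, so it would still work if one knew merely that $\one_{E_j}\,dx\to\one_E\,dx$ vaguely as measures; your direct pairing requires the full weak $L^2$ convergence. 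Since the paper explicitly defines $\rightharpoonup$ as weak $L^2$ convergence (so $\one_E$ is a legitimate test function), your shortcut is fully justified here; the paper's detour buys robustness you do not need.
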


\begin{proof}
Let $\eps>0$. 
Let $K,\scripto$ respectively be a compact set and an open set such that 
$K\subset E\subset\scripto$ and $|\scripto\setminus K|<\eps$.
Let $\varphi:\reals^d\to[0,1]$ be a continuous function which satisfies $\varphi\equiv 1$ on $K$
and $\varphi\equiv 0$ outside of $\scripto$. Then
\[ |E_j\cap\scripto|\ge 
\int \varphi\one_{E_j}\to \int\varphi\one_E\ge \int \varphi\one_K=|K|>|E|-\eps.\] 
Therefore \[\liminf_{j\to\infty} |E_j\cap\scripto| \ge |E|-\eps,\] 
and consequently \[\liminf_{j\to\infty} |E_j\cap E| \ge |E|-\eps-|\scripto\setminus E|\ge |E|-2\eps.\] 
Since $|E_j|\to |E|$, this implies that $\limsup_{j\to\infty} |E_j\bigtriangleup E|<2\eps$.
\end{proof}

Thus $\one_{A_j}\to\one_A$ and $\one_{B_j}\to\one_B$ in $L^1$ norm.
Since we already know that $\langle \one_{A_j}*\one_{B_j},\one_I\rangle$
converges to $\langle f*g,\one_I\rangle=\langle \one_A*\one_B,\one_I\rangle$,
and since on the other hand
$\langle \one_{A_j}*\one_{B_j},\one_I\rangle\to\Theta(|A|,|B|,|I|)$ by hypothesis,
we conclude that
$\langle \one_A*\one_B,\one_I\rangle = \Theta(|A|,|B|,|I|)$.
These three measures $|A|,|B|,|I|$ satisfy the hypothesis \eqref{HBu} of 
Burchard's inverse theorem. Therefore $A,B$ are intervals, modulo null sets.
Since $|A_j \bigtriangleup A| = \norm{\one_{A_j}-\one_A}_1$ and the latter has been shown
to converge to zero, the proof of Proposition~\ref{prop:trivialintervalcase} is complete.
\qed

\medskip
To extend Proposition~\ref{prop:trivialintervalcase}
to higher dimensions, with the interval $I$ replaced by a compact convex set $K$ of positive Lebesgue measure, 
requires only a small modification. 
Let  $\scriptb(z,R)$ denote the ball in the norm associated to $K$, with center $z$ and radius $R$.
In place of Lemma~\ref{lemma:nogaps}, it suffices to show that there cannot exist a radius $R\in(0,\infty)$ and
center $z\in\reals^d$ such that $|A\cap \scriptb(z,R)|$ and $|A\cap (\reals^d\setminus \scriptb(z,2R))|$ are bounded below
while $|A\cap (\scriptb(z,2R)\setminus \scriptb(z,R))|$ is nearly equal to zero.
This follows from the proof of Lemma~\ref{lemma:nogaps}.
Precompactness is obtained by bounding $K$, inside and outside, by comparable ellipsoids,
then exploiting affine symmetries to reduce to the case where the ellipsoids are balls.

\section{Conclusion of proof}
\begin{proof}[Proof of Theorem~\ref{mainthm}]
Lemmas~\ref{lemma:Salphalowerbound}  and \ref{lemma:Salphaupperbound}
together with Corollary~\ref{cor:nearlyintervals}
demonstrate that $E$ is well approximated by some interval $I$,
in the sense that $|E\bigtriangleup I|\le C\delta^{1/2}\mab$.  Then 
\begin{align*} \langle \one_A*\one_B,\one_I\rangle 
&\ge \langle \one_A*\one_B,\one_E\rangle -\mab|E\bigtriangleup I|
\\
&\ge \Theta(|A|,|B|,|E|)-\delta\mab^2 - \mab |E\bigtriangleup I|
\\ &\ge \Theta(|A|,|B|,|I|) - C\delta^{1/2}\mab^2.  \end{align*}
By Proposition~\ref{prop:trivialintervalcase},
there exists an interval $J$ such that $|J\bigtriangleup A|<\eps$,
where $\eps\to 0$ as $\delta\to 0$.
\end{proof}

\section{Proof of Proposition~\ref{prop:keystone}} \label{section:keystone}
Write $\#(S)$ to denote the cardinality of a finite set $S$, and $|S|$ for the
Lebesgue measure of a subset $S\subset\reals$.
The proof of Proposition~\ref{prop:keystone} uses the following theorem of Freiman \cite{freiman}. 
See Theorem 5.11 of \cite{taovu} for an exposition,
and \cite{levsmeliansky} for an extension to two sets.

The theorem of Freiman states the following: 
Let $\scripta$ be a finite subset of $\integers$.
If $\#(\scripta+\scripta)<3\#(\scripta)-3$, then
$\scripta$ is contained in a rank one arithmetic progression of cardinality
$\le \#(\scripta+\scripta)-\#(\scripta)+1$.

\begin{proof}[Proof of Proposition~\ref{prop:keystone}]
Let $A\subset\reals$ be a Lebesgue measurable
set with finite, positive measure.  Assume that $|A+A|<3|A|-\rho$ for some $\rho>0$.
$\rho$ will remain fixed throughout the discussion.

Let $\eps,\delta>0$ be small parameters.
In particular, we require that $\delta<\tfrac12$.
For $n\in\integers$ consider the interval $I_n=(\eps n-\tfrac{\eps}2,\eps n+\tfrac{\eps}2)$.
Let $\scripta\subset\integers$ be the set of all $n$ for which $|A\cap I_n|\ge (1-\delta)|I_n|$,
and let $\tilde A=\cup_{n\in\scripta} I_n$.
By the Lebesgue differentiation theorem, the symmetric difference $A\,\bigtriangleup\,\tilde A$
satisfies $|A\,\bigtriangleup\, \tilde A|\to 0$ as $\max(\eps,\delta)\to 0$.
Therefore $\eps\#(\scripta)-|A|\to 0$ as $\max(\eps,\delta)\to 0$, as well.
In particular,
\[(1-\eta)\eps^{-1}|A|\le \#(\scripta)\le (1+\eta)\eps^{-1}|A|\] 
where $\eta=\eta(\eps,\delta)$ tends to zero as $\max(\eps,\delta)\to 0$.

If $S,T\subset(-\tfrac12,\tfrac12)$
are measurable sets of measures $>\tfrac12$, then $S\cap T$ has positive Lebesgue
measure and therefore $0\in S-T$.
It follows from this fact that if $m,n\in\scripta$,  then $\eps n+\eps m\in A+A$.
Therefore $k\in \scripta+\scripta\Rightarrow \eps k\in A+A$.
Therefore 
\begin{equation} \#(\scripta+\scripta)\le \eps^{-1}|A+A|. \end{equation}

Now
\begin{align*} \#(\scripta+\scripta) +3 &\le \eps^{-1}|A+A| +3 
\\ &< \eps^{-1}(3-\rho) |A| +3
\\ &\le  (3-\rho)(1-\eta)^{-1}\#(\scripta) +3  
\\ &=3\#(\scripta) +(3-\varrho\#(\scripta)) \end{align*}
where $\varrho>0$ may be taken to be independent of $\eps,\delta,\eta$ provided only
that these quantitites are sufficiently small.
Since $\#(\scripta)\to\infty$ as $\max(\eps,\delta)\to 0$,  $(3-\varrho\#(\scripta))<0$
provided that $\eps\delta$ are chosen to be sufficiently small, and thus $\#(\scripta+\scripta)<3\#(\scripta)-3$. 

The theorem of Freiman cited above now implies that there exists an arithmetic progression 
$\scriptp=\scriptp(\eps,\delta)\subset\integers$ such that $\scripta\subset\scriptp$ and
\[\#(\scriptp) \le \#(\scripta+\scripta)-\#(\scripta)+1.  \]
The set $P=P(\eps,\delta)= \cup_{n\in\scriptp} I_n$ then satisfies
\begin{multline*}|P| =\eps\#(\scriptp) \le \eps\#(\scripta+\scripta)-\eps\#(\scripta)+\eps  
\\ \le |A+A|-(1-\eta)|A|+\eps  = |A+A|-|A| +\eta|A|+\eps.  \end{multline*} 

$\scriptp$ is an arithmetic progression of rank $1$ in $\integers$, of some step $d$ which without
loss of generality can be taken to be positive. We claim that $d=1$.  Suppose not. Since
$\scriptp\subset k+d\integers$ for some $k\in\integers$, and $\scripta\subset\scriptp$, for any $m,m',n,n'\in\scripta$ 
\begin{equation}\label{disjointness} |(m+n)-(m'+n')|\ge 2 \text{ unless $m+n=m'+n'$.}\end{equation} 

Represent $A$ as the set of all $\eps n+\eps s$, where $n\in\scripta$ and $s\in S_n$,
where $S_n\subset (-\tfrac12,\tfrac12)$.
We have already arranged that $|S_n|\ge (1-\delta)$ for every $n\in\scripta$.

For any measurable sets $S,T \subset(-\tfrac12,\tfrac12)$, the associated sumset $S+T$
is contained in $(-1,1)$ and satisfies $|S+T|\ge |S|+|T|$ by the Brunn-Minkowski inequality.
Thus for each element $n$ of $\scripta+\scripta$, the set $A+A$ 
intersected with the interval of length $2\eps$ centered at $\eps n$
has measure $\ge (2-2\delta)\eps$. As $n$ varies over $\scripta+\scripta$, 
these intersections are pairwise disjoint by \eqref{disjointness}.
Since $\#(\scripta+\scripta)\ge 2\#(\scripta)-1$
by the Cauchy-Davenport inequality \cite{taovu}, 
\begin{equation*} |A+A| \ge (2-2\delta)\eps\#(\scripta+\scripta) \ge (2-2\delta)\eps(2\#(\scripta)-1). \end{equation*}

Therefore
\begin{equation*} |A+A| \ge (2-2\delta)(1-\eta)2|A| -2\eps \ge  (4-\varrho)|A| -2\eps \end{equation*}
where $\varrho\to 0$ as $\max(\eps,\delta)\to 0$.
For a sufficiently small choice of the parameters $\eps,\delta$,
this contradicts the hypothesis $|A+A|<3|A|$.  Therefore $d=1$.

The union of $P$ with finitely many points $n\pm\tfrac12$ is an interval.
We have thus proved that for any $\gamma>0$, there exists an interval $I_\gamma\subset\reals$
such that $A\subset I_\gamma$ and $|I_\gamma|\le |A+A|-|A|+\gamma$.
Then $I=\cap_{n=1}^\infty I_{1/n}$ is an interval; it contains $A$; it satisfies $|I|\le |A+A|-|A|$.
\end{proof}


\begin{thebibliography}{20}

\bibitem{burchard} A.~Burchard,
{\em Cases of equality in the Riesz rearrangement inequality}, 
Ann. of Math. (2) 143 (1996), no. 3, 499--527

\bibitem{youngdiscrete}
M.~Charalambides and M.~Christ,
{\em Near-extremizers of Young's inequality for discrete groups},
in preparation

\bibitem{christradon}
M.~Christ, 
{\em Extremizers of a Radon transform inequality},
preprint, Math.CA arXiv:1106.0719 

\bibitem{christyoung}
\bysame,
{\em Near-extremizers of Young's inequality for $\reals^d$},
in preparation

\bibitem{freiman}
G.~Freiman, {\em Structure theory of set addition}, Ast\'erisque No. 258 (1999), xi, 1--33

\bibitem{levsmeliansky}
V.~Lev and P.~Y.~Smeliansky,
{\em On addition of two distinct sets of integers}, 
Acta Arith. 70 (1995), no. 1, 85--91. 

\bibitem{liebHLS}
E.~Lieb, {\em Sharp constants in the Hardy-Littlewood-Sobolev and related inequalities}, 
Ann. of Math. (2) 118 (1983), no. 2, 349--374

\bibitem{liebloss}
E.~H.~Lieb and M.~Loss, 
{\em Analysis}, Amer. Math. Soc., Providence, RI, 1997


\bibitem{riesz}
F.~Riesz,
{\em Sur une in\'egalit\'e int\'egrale}, Journal London Math. Soc. 5 (1930)

\bibitem{sobolev}
S.~L.~Sobolev,
{\em On a theorem of functional analysis}, Mat. Sb. (N.S.) 4 (1938), 471-497

\bibitem{taovu}
T.~Tao and V.~H.~Vu,
{\em Additive Combinatorics}, 
Cambridge Studies in Advanced Mathematics, 105. Cambridge University Press, Cambridge, 2010
\end{thebibliography}
\end{document}